\theoremstyle{plain}
\newtheorem{theorem}{Theorem}
\newtheorem{corollary}[theorem]{Corollary}
\newtheorem{lemma}[theorem]{Lemma}
\theoremstyle{definition}
\newtheorem{definition}[theorem]{Definition}
\newtheorem{example}[theorem]{Example}
\theoremstyle{remark}
\newtheorem{remark}[theorem]{Remark}
\numberwithin{theorem}{section}
\newcommand{\Spec}[1]{\mbox{\rm{Spec}}(#1)}
\newcommand{\mSpec}[1]{\mbox{\rm{mSpec}}(#1)}
\newcommand{\Add}{\mbox{\rm{Add\,}}}
\newcommand{\Prod}{\mbox{\rm{Prod\,}}}
\newcommand{\Cog}{\mbox{\rm{Cog\,}}}
\newcommand{\Ass}[1]{\mbox{\rm{Ass}} \,#1}
\newcommand{\im}{\mbox{\rm{Im\,}}}
\newcommand{\Hom}[3]{\mbox{\rm{Hom}}_{#1}(#2,#3)}
\newcommand{\Ext}[4]{\mbox{\rm{Ext}}^{#1}_{#2}(#3,#4)}
\newcommand{\Tor}[4]{\mbox{\rm{Tor}}_{#1}^{#2}(#3,#4)}
\newcommand{\rfmod}[1]{\mbox{\rm{mod}--}{#1}}
\newcommand{\rmod}[1]{\mbox{\rm{Mod}--}{#1}}
\newcommand{\Ker}[1]{\mbox{\rm{Ker}}(#1)}
\newcommand{\m}{\mathfrak{m}}
\newcommand{\p}{\mathfrak{p}}
\newcommand{\q}{\mathfrak{q}}
\begin{document}

\title{Colocalization and cotilting for commutative noetherian rings}

\author{Jan Trlifaj} 
\address{Charles University in Prague, Faculty of Mathematics and Physics, Department of Algebra \\
Sokolovsk\'{a} 83, 186 75 Prague 8, Czech Republic}
\email{trlifaj@karlin.mff.cuni.cz}

\author{Serap \c{S}ahinkaya}
\address{Gebze Institute of Technology, Department of Mathematics, 41400 Gebze, Turkey}
\email{ssahinkaya@gyte.edu.tr}

\date{\today}
\subjclass[2010]{Primary: 13C05. Secondary: 13D07, 13E05.}
\keywords{commutative noetherian ring, cotilting module, characteristic sequence, colocalization.}
\thanks{Research supported by GA\v CR 201/09/0816.}
\begin{abstract} For a commutative noetherian ring $R$, we investigate relations between tilting and cotilting modules in $\rmod R$ and $\rmod R_{\m}$ where $\m$ runs over the maximal spectrum of $R$. For each $n < \omega$, we construct a 1-1 correspondence between (equivalence classes of) $n$-cotilting $R$-modules $C$ and (equivalence classes of) compatible families $\mathcal F$ of $n$-cotilting $R_{\m}$-modules ($\m \in \mSpec R$). It is induced by the assignment $C \mapsto ( C^{\m} \mid \m \in \mSpec R )$ where $C^{\m} = \Hom R{R_{\m}}C$ is the colocalization of $C$ at $\m$, and its inverse $\mathcal F \mapsto \prod_{F \in \mathcal F} F$. We construct a similar correspondence for $n$-tilting modules using compatible families of localizations; however, there is no explicit formula for the inverse.   
\end{abstract}

\maketitle

\section*{Introduction}

Tilting and cotilting modules and classes over Dedekind domains are well understood for almost a decade, see \cite{BET}. A generalization of their classification to Pr\" ufer domains is due to Bazzoni \cite{SB1}, while tilting and cotilting classes over commutative noetherian rings have been characterized in \cite{APST}. Moreover, a construction of cotilting modules over commutative noetherian rings using injective precovers has recently been discovered in \cite{STH}.

Though tilting modules over commutative rings behave well with respect to localization \cite[\S 13.3]{GT}, this is not the case for cotilting modules. In fact, the localization of an injective cogenerator need not be injective or a cogenerator (see \cite{C} and \cite{STH}). The appropriate tool for studying cotilting modules over commutative rings is the colocalization: in \cite{SB1}, it was first used to transfer the classification problem for cotilting modules from Pr\" ufer to valuation domains. 

In the present paper, we extend this idea to commutative noetherian rings. Of course, there are essential differences between the Pr\" ufer and the noetherian cases: in the Pr\" ufer case, all cotiliting modules have injective dimension $\leq 1$, but they need not be of cofinite type (that is, equivalent to duals of tilting modules). In the noetherian case, there is no a priori bound on the injective dimension of cotilting modules, but all cotilting modules are of cofinite type \cite{APST}. This is the key point here; in fact, parts of our results on colocalization extend to cotilting modules of cofinite type over arbitrary commutative rings. 

In the noetherian setting, we construct a 1-1 correspondence between equivalence classes of cotilting modules $C$ and equivalence classes of compatible families of their colocalizations in all maximal ideals of $R$ (see Corollary \ref{coloccor}). We prove a similar results for tilting modules using families of localizations, but the reverse correspondence is much less direct in this case.

\section*{Preliminaries}

In what follows, $R$ is a commutative and associative ring with unit. The category of all ($R$-) modules is denoted by $\rmod R$, 
while $\rfmod R$ denotes the category of all \emph{strongly finitely presented} modules, that is the modules possessing a projective resolution consisting of finitely generated modules. For example, if $R$ is noetherian, then $\rfmod R$ is just the category of all finitely generated modules. The set of all prime (maximal) ideals of $R$ is denoted by $\Spec R$ ($\mSpec R$). For a module $M$, $\Add M$ denotes the class of all
direct summands of (possibly infinite) direct sums of copies of $M$. Similarly, $\Prod M$ denotes the class of all direct summands of  direct products of copies of $M$.

Recall that given a multiplicative set $S \subseteq R$ and a module $M \in \rmod R$, $S^{-1}R$ denotes the localization of $R$ at $S$, 
and $S^{-1}M \cong M\otimes S^{-1}R$ the localization of $M$ at $S$. 
In case $S = R \setminus \p$ for a prime ideal $\p$ of $R$, we will prefer the more standard notation of $R_{\p}$ and 
$M_{\p}$ for $S^{-1}R$ and $S^{-1}M$, respectively.

For a class of modules $\mathcal C$, we define the Ext-orthogonal classes $\mathcal C ^{\perp}=\{M \in \rmod R \mid \Ext iRCM =0 \mbox{ for all } C \in \mathcal C \mbox{ and } i \geq 1\}$ and 
$^{\perp} \mathcal C =\{M \in \rmod R \mid \Ext iRMC =0 \mbox{ for all } C \in \mathcal C \mbox{ and } i \geq 1\}$.

\medskip
Let $n < \omega$. A module $T$ is \emph{$n$-tilting} provided that
\begin{itemize}
\item[\rm{(T1)}] $T$ has projective dimension $\leq n$,
\item[\rm{(T2)}] $\Ext iRT{T^{(\kappa)}} = 0$ for each $i>0$ and all cardinals $\kappa$.
\item[\rm{(T3)}] There exists a long exact sequence
$0 \to R \to T_0 \to  T_1 \to \cdots \to T_n \to 0$
where $T_i \in \Add T$ for each $0\leq i \leq n$.
\end{itemize}

The class $\{ T \}^{\perp}$ is called the \emph{$n$-tilting class} induced by $T$.  If $T$ and $T'$ are tilting modules, then $T$ is said to be \emph{equivalent} to $T'$ provided
that $\{ T \}^{\perp}=\{ T' \}^{\perp}$.

Let $\mathcal{A},\mathcal{B} \in \rmod R$. The pair $(\mathcal{A},\mathcal{B})$ is called a (hereditary) \emph{cotorsion pair}, 
if $\mathcal{A}={}^{\perp}\mathcal{B}$ and $\mathcal{B}=\mathcal{A}^{\perp}$. Notice that for $\mathcal C \subseteq \rmod R$,
$(^{\perp}(\mathcal{C^{\perp}}),\mathcal{C^{\perp}})$ and $(^{\perp}\mathcal{C},(^{\perp}\mathcal{C})^{\perp})$
are cotorsion pairs, called the cotorsion pairs generated and cogenerated, respectively, by $\mathcal C$. 

For a class of modules $\mathcal X$, we let $\mathcal X^\intercal = \{M \in \rmod R \mid \Tor RiX{M} =0 \mbox{ for all } i \geq 1 \mbox{ and } X \in \mathcal X \}$. A subclass $\mathcal F$ of $\rfmod R$ is called \emph{resolving} provided that $\mathcal F$ is closed under extensions and direct summands, it contains $R$ as well as all kernels of epimorphisms in $\mathcal F$. For example, if $(\mathcal{A},\mathcal{B})$ is a cotosion pair, then the class $\mathcal A \cap \rfmod R$ is resolving.    

Given a class $\mathcal{F}\subseteq \rmod R$ and a module $M$, a well-ordered chain of submodules
$$0=M_0\subseteq M_1\subseteq \dots \subseteq M_{\alpha}\subseteq M_{\alpha+1}\subseteq \dots \subseteq M_\sigma=M$$
is called an $\mathcal{F}$-\emph{filtration} of $M$ if $M_{\beta}=\bigcup_{\alpha < \beta}M_{\alpha}$ for every limit ordinal
$\beta \leq \sigma$, and for each $\alpha < \sigma$, $M_{\alpha+1}/M_{\alpha}$ is isomorphic to an element of $\mathcal F$.
$M$ is $\mathcal{F}$-\emph{filtered} in case $M$ possesses at least one $\mathcal{F}$-filtration.

The following is known as the finite type theorem for tilting modules (see e.g.\ \cite[Theorem 13.46]{GT}):  

\begin{theorem}\label{ftype}  Let $R$ be a ring, $T$ a tilting module
and $(\mathcal{A},\mathcal{B})$ the cotorsion pair generated by $T$. Let $\mathcal F = \mathcal A \cap \rfmod R$. Then
$\mathcal F ^\perp = \mathcal B$, and $T$ is equivalent to a tilting module $T^\prime$ such that $T^\prime$  is $\mathcal F$-filtered.
\end{theorem}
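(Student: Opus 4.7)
The plan is to split the statement into two independent claims and carry them out in order: first establish the equality $\mathcal F^\perp = \mathcal B$, which is the ``finite type'' content proper, and then build an explicit $\mathcal F$-filtered representative $T'$ of the tilting class $\mathcal B$.

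For the first claim, the inclusion $\mathcal B \subseteq \mathcal F^\perp$ is formal, since $\mathcal F \subseteq \mathcal A$ and $\perp$ here is defined by vanishing of $\mathrm{Ext}^i$ for every $i \geq 1$. For the converse I would derive the stronger statement $\mathcal A \subseteq \mathrm{Filt}(\mathcal F)$; granted this, any $M \in \mathcal F^\perp$ satisfies $\Ext iRAM = 0$ for every $A \in \mathcal A$ and every $i \geq 1$ by transfinite induction along the $\mathcal F$-filtration of $A$ (Eklof's lemma, in its version for all higher $\mathrm{Ext}$), whence $M \in \mathcal A^\perp = \mathcal B$. The core task is therefore $\mathcal A \subseteq \mathrm{Filt}(\mathcal F)$, which I would split into two stages. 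First, a \emph{countable type} reduction after Bazzoni--Herbera: every $A \in \mathcal A$ admits a filtration whose factors are countably presented modules from $\mathcal A$; this is proved by applying Hill's lemma to a projective resolution of $A$ of length $\leq n$, combined with a Kaplansky-style decomposition of the projective modules into countably generated summands. Second, a \emph{finite type} step: every countably presented $A \in \mathcal A$ is $\mathcal F$-filtered; here the length bound on the projective resolution is crucial, and one constructs inductively a countable chain of finitely presented partial resolutions whose cokernels remain in $\mathcal A$, reading off a filtration of $A$ whose successive quotients are finitely presented of projective dimension $\leq n$, hence lie in $\mathcal F$.

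For the second claim, once $\mathcal A = \mathrm{Filt}(\mathcal F)$ is available, the cotorsion pair $(\mathcal A, \mathcal B)$ is generated by a representative set of $\mathcal F$ and is therefore complete in the sense of Salce. Applying a special $\mathcal B$-preenvelope to $R$ and iterating, then truncating after $n$ steps (justified by (T1)), one obtains an exact sequence $0 \to R \to T'_0 \to \cdots \to T'_n \to 0$ with each $T'_i \in \mathcal A \cap \mathcal B$. Choosing the preenvelopes so that each $T'_i$ is itself $\mathcal F$-filtered, the module $T' = \bigoplus_{i=0}^n T'_i$ verifies (T1)--(T3), is $\mathcal F$-filtered by construction, and induces the cotorsion pair $(\mathcal A, \mathcal B)$; hence $\{ T' \}^{\perp} = \mathcal B = \{ T \}^{\perp}$ and $T' \sim T$.

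The main obstacle is the finite-type stage in the proof of $\mathcal A \subseteq \mathrm{Filt}(\mathcal F)$: the countable-type reduction is by now fairly standard, but the passage from countably presented to finitely presented genuinely requires the projective-dimension bound supplied by (T1), without which a countably presented module in $\mathcal A$ need not admit any filtration by finitely presented modules from $\mathcal A$. Once this step is secured, Eklof's lemma and Salce's approximation machinery reduce the rest of the theorem to formal manipulations.
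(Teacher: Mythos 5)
The paper does not prove this theorem; it is stated as a known result and cited to \cite[Theorem 13.46]{GT}, so there is no internal proof to compare against. Your sketch reproduces the standard proof from that reference and the tilting literature: reduce $\mathcal A \subseteq \mathrm{Filt}(\mathcal F)$ first to countable type (Bazzoni--Herbera, extended by \v S\v tov\'\i\v cek to $n > 1$), then to finite type using the projective dimension bound; conclude $\mathcal F^\perp = \mathcal B$ via Eklof's lemma together with dimension shifting along the resolving class $\mathcal F$; and finally build $T'$ by iterating special $\mathcal B$-preenvelopes of $R$ using the small object argument so that each $T'_i$ is $\mathcal F$-filtered.

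One imprecision worth flagging: the countable-type reduction is not obtained merely by applying Hill's lemma to a projective resolution together with a Kaplansky decomposition. That combination yields only $\kappa$-deconstructibility for some cardinal $\kappa$ depending on $|R|$ and $|T|$. Driving $\kappa$ down to $\aleph_0$ is the actual content of the Bazzoni--Herbera argument, and it relies essentially on the closure of the tilting class $\mathcal B$ under direct limits (equivalently, definability of $\mathcal B$), which is not present in your outline. A second minor point: in the passage from countably presented to finitely presented, the successive filtration quotients must be shown to lie in $\mathcal A$ as well as to be strongly finitely presented of projective dimension $\le n$; ``finitely presented of projective dimension $\le n$, hence in $\mathcal F$'' elides this membership-in-$\mathcal A$ requirement, although your earlier phrase ``cokernels remain in $\mathcal A$'' suggests you had it in mind. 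Neither remark affects the overall correctness of the strategy, which matches the cited source.
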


The terminology of finite type here comes from the fact that in the theorem above, while $T$ is typically infinitely generated, the tilting class $\mathcal B$ equals $\mathcal F ^\perp$ where 
$\mathcal F$ is a resolving subcategory of $\rfmod R$ consisting of modules of bounded projective dimension (the bound being the projective dimension of $T$). 
 
If $T$ is an $n$-tilting module, then the localization $S^{-1} T$ is an $n$-tilting $S^{-1} R$-module
(see e.g.\ \cite[\S 13.3]{GT}). Moreover, filtrations go well with localization, that is, if $T$ is $\mathcal{C}$-filtered, 
then $S^{-1} T$ is $\mathcal{C}_S$-filtered where $\mathcal{C}_S=\{S^{-1} M \mid M \in \mathcal{C}\}$, and the induced tilting class in $\rmod {S^{-1}R}$ equals
$$\mathcal{T}_S=\{N \in \rmod {S^{-1}R} \mid N \cong S^{-1}M \mbox{ for some } M \in \mathcal{T} \}=\mathcal{C}_S^\perp$$
(see \cite[13.47 and 13.50]{GT}).
 
\medskip
We now recall the dual setting of cotilting modules. Let $n < \omega$. A module $C$ is \emph{$n$-cotilting} provided that
\begin{itemize}
\item[\rm{(C1)}] $C$ has injective dimension $\leq n$,
\item[\rm{(C2)}] $\Ext iR{C^{\kappa}}C = 0$ for each $i>0$ and all cardinals $\kappa$.
\item[\rm{(C3)}] There exists a long exact sequence
$0 \to C_n \to \cdots \to C_1 \to C_0 \to W \to 0$
where $C_i \in \Prod C$ for each $0\leq i\leq n$ and $W$ is an injective cogenerator for $\rmod R$.
\end{itemize}

By Bazzoni \cite{SB}, a module $C$ is $n$-cotilting, iff $^{\perp} C = \Cog{_n}(C)$ where $\Cog{_n}(C)$
denotes the class  of all modules $M$ for which there exists an exact sequence of the form
$$0 \to M \to C^{\alpha_0} \to C^{\alpha_2} \to \cdots \to C^{\alpha_{n-1}}$$ for some cardinals $\alpha_i$ ($i < n$).
The class ${}^\perp C$ is the \emph{$n$-cotilting class} induced by $C$. If $C$ and $C'$ are cotilting modules, then $C$ is said to be \emph{equivalent} to $C'$ provided
that ${}^{\perp} C ={}^{\perp}C'$.

A cotilting class $\mathcal{C}$ is of \emph{cofinite type} provided that there exist $n < \omega$ and a class $\mathcal{F}$ consisting of strongly finitely presented modules of projective dimension $\leq n$, such that $\mathcal{C}=\mathcal{F}^{\intercal}$. Let $C$ be a cotilting module. Then $C$ is of \emph{cofinite type} provided the class $^{\perp}C$ is of cofinite type. 

While all tilting modules are of finite type by Theorem \ref{ftype}, the dual result fails in general: cotilting modules that are not of cofinite type do exist over each non-strongly discrete valuation domain, \cite{SB1}. However, if $R$ is any commutative noetherian ring, then all cotilting modules are of cofinite type by \cite{APST} (see also Lemma \ref{apst} below). 

Let $\mu$ be an ordinal and $\mathcal{A}=(A_\alpha: \alpha \leq \mu)$ be a sequence of modules.
Let $(g_{\alpha\beta}:\alpha \leq \beta \leq \mu )$ be a sequence of epimorphisms with
$g_{\alpha\beta}\in \Hom R{A_\beta}{A\alpha}$, such that $\mathcal{E}=\{A_\alpha, g_{\alpha\beta} \mid \alpha \leq \beta \leq \mu\}$ is an inverse
system of modules. $\mathcal{E}$ is called \emph{continuous}, provided that $A_0=0$ and
$A_\alpha=\varprojlim_{ \beta \leq \alpha}A_\beta$ for all limit ordinals $\alpha \leq \mu$.

Let $\mathcal{C}$ be a class of modules. Assume $\mathcal{E}$ is a continuous inverse system as above.
Then $A = \mathcal{A}_\mu$ is called \emph{$\mathcal{C}$-cofiltered} (by $\mathcal{A}$), provided that $\Ker {g_{\alpha\alpha+1}}$ is isomorphic
to an element of $\mathcal{C}$ for all $\alpha \leq \mu$. The sequence $\mathcal{A}$ is a \emph{$\mathcal{C}$-cofiltration} of $A$.

Denote by $(-)^*$ the duality $\Hom R-W$ where $W$ is an injective cogenerator for $\rmod R$. If $T$ is an $n$-tilting module and $\mathcal F = {}^\perp (T^{\perp}) \cap \rfmod R$ then $\mathcal F$ is a resolving subcategory of $\rfmod R$ consisting of modules of projective dimension $\leq n$. W.l.o.g., $T$ is $\mathcal F$-filtered (see Theorem \ref{ftype}). The dual module $C=T^*$ is an $n$-cotilting module, and it is $\mathcal{F}^*$-cofiltered, the induced cotilting class in $\rmod R$ being $\mathcal{F} ^\intercal$ (see \cite[Theorem 15.2]{GT}). In fact, an $n$-cotilting module $D$ is of cofinite type, if and only if $D$ is equivalent to $T^*$ for an $n$-tilting module $T$. 

Finally, we note that unlike filtrations, the cofiltrations do not go well with localizations of modules because localizations do not commute with the inverse limits that are used in limit steps of constructions of the cofiltrations. Colocalizations (to be introduced in the next section) give a way of overcoming this problem.

\section{Colocalization of cotilting modules}

The notion of colocalization is due to Melkerson and Schenzel (cf.\ \cite[p.118]{X}), but similar constructions were used already in \cite{CE} (see Lemma \ref{cartanei} below).

\begin{definition} \label{defcoloc} For every module $M$ and every maximal ideal  $\m$ of $R$, we denote by
$M^{\m}$ the $R_{\m}$-module $\Hom R{R_{\m}}M$; we call it the \emph{colocalization} of $M$ at $\m$.

Similarly, for a class of modules $\mathcal C$, we define $\mathcal C ^{\m} = \{ C^{\m} \mid C \in \mathcal C \}$, so $\mathcal C ^{\m}$ is a class of $R_{\m}$-modules.
\end{definition}

\begin{example} \label{ex} 1. If $M$ is an $R_{\m}$-module, then 
$$M^{\m} = \Hom R{R_{\m}}M = \Hom {R_{\m}}{R_{\m}}M \cong M$$ as $R_{\m}$-modules.

2. If $M = N^*$ is a dual module, then $M^{\m} = \Hom R{R_{\m}}{N^*} \cong (N \otimes_R R_{\m})^*$, so the colocalization of a dual module is isomorphic to the dual of a localization.
In particular, $M^{\m}$ is pure-injective as an $R$-module for each pure-injective module $M$.
\end{example}

We will need the following classic facts:

\begin{lemma} \label{cartanei} Let $S$ be a multiplicative subset of $R$ and $i \geq 1$. 
\begin{itemize}
\item[\rm{(a)}] For every module $A$ and $S^{-1}R$-module $B$, we have $\Ext i{S^{-1}R}{S^{-1}A}B\cong \Ext iR{A}B$.
\item[\rm{(b)}] For every pure-injective module $B$ and every $S^{-1}R$-module $A$, we have $\Ext i{S^{-1}R}A {\Hom R{S^{-1}R}{B}} \cong \Ext iR{A}B$.
\end{itemize}
In particular, if $\m \in \mSpec R$, $M$ and $C$ are modules, and $C$ is pure-injective, then $\Ext iRM{C^{\m}} \cong \Ext iR{M_{\m}}C$.
\end{lemma}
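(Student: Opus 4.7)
The plan is to derive both isomorphisms from the two natural adjunctions between $\rmod R$ and $\rmod{S^{-1}R}$ that arise from the ring map $R \to S^{-1}R$, combined with the flatness of $S^{-1}R$ over $R$. The extra ingredient needed for (b) is the vanishing $\Ext iR{F}B = 0$ for any flat $R$-module $F$, any pure-injective $B$, and $i \geq 1$.

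For part (a), I would take a projective resolution $P_\bullet \to A$ in $\rmod R$. Since $S^{-1}R$ is $R$-flat and localization commutes with direct sums, $S^{-1}P_\bullet \to S^{-1}A$ is a projective resolution in $\rmod{S^{-1}R}$. The extension-restriction adjunction gives a natural isomorphism of cochain complexes
\[
\Hom{S^{-1}R}{S^{-1}P_\bullet}{B} \;\cong\; \Hom R{P_\bullet}{B},
\]
valid because $B$ already carries an $S^{-1}R$-structure; taking $i$-th cohomology yields (a).

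For part (b), I would dually start from a projective resolution $Q_\bullet \to A$ in $\rmod{S^{-1}R}$. Each $Q_n$ is a direct summand of a free $S^{-1}R$-module and is therefore flat over $R$, so $Q_\bullet$ is a flat resolution of $A$ viewed in $\rmod R$. The key step is to argue that this flat resolution nevertheless computes $\Ext iR{A}{B}$: since $B$ is pure-injective it is a direct summand of its double character module $B^{++}$, and the standard isomorphism $\Ext iR{F}{B^{++}} \cong \Tor iR{F}{B^{+}}^{+}$ together with the flatness of $F$ gives $\Ext iR{F}{B} = 0$ for every flat $F$ and $i \geq 1$. Hence $Q_\bullet$ is $\Hom R{-}{B}$-acyclic and $\Ext iR{A}{B} \cong H^{i}\bigl( \Hom R{Q_\bullet}{B} \bigr)$. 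The coinduction-restriction adjunction then gives $\Hom{S^{-1}R}{Q_n}{\Hom R{S^{-1}R}{B}} \cong \Hom R{Q_n}{B}$ naturally in $n$, and taking cohomology yields (b).

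The final clause follows by a single application of each part with $S = R\setminus\m$: (b) with $A = M_{\m}$ and $B = C$ gives $\Ext i{R_{\m}}{M_{\m}}{C^{\m}} \cong \Ext iR{M_{\m}}{C}$, while (a) with $A = M$ and $B = C^{\m}$ gives $\Ext iR{M}{C^{\m}} \cong \Ext i{R_{\m}}{M_{\m}}{C^{\m}}$; combining them produces $\Ext iR{M}{C^{\m}} \cong \Ext iR{M_{\m}}{C}$. I expect the main obstacle to be the step in (b) that licenses using the $S^{-1}R$-projective resolution for computing $\Ext_R$: this would fail for a general target, and pure-injectivity is precisely the hypothesis that forces flat modules to be $\Hom R{-}{B}$-acyclic.
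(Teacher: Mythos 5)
Your argument is correct and follows essentially the same route as the paper: the paper simply cites Cartan--Eilenberg, Propositions VI.4.1.3 and VI.4.1.4, for the two change-of-rings isomorphisms, using the flatness of $S^{-1}R$ for (a) and the vanishing $\Ext nR{S^{-1}R}{B}=0$ for pure-injective $B$ as the hypothesis for (b), whereas you unwind those propositions into explicit projective-resolution arguments and also supply the character-module proof that flat modules are $\Hom R{-}{B}$-acyclic when $B$ is pure-injective. The chain of isomorphisms in the final clause, $\Ext iRM{C^{\m}} \cong \Ext i{R_{\m}}{M_{\m}}{C^{\m}} \cong \Ext iR{M_{\m}}C$, is identical to the paper's.
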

\begin{proof} (a) Since $S^{-1}R$ is a flat module, $\Tor nR{S^{-1}R}A=0$ for every $n \geq 1$, and the claim follows by \cite[Proposition VI.4.1.3]{CE}.

(b) Since $S^{-1}R$ is flat and $B$ is pure-injective, $\Ext nR{S^{-1}R}B=0$ for every $n \geq 1$, and the claim follows by \cite[Proposition VI.4.1.4]{CE}.

Finally, applying the isomorphisms from (a) and (b) to the particular setting, we obtain $\Ext iRM{C^{\m}} \cong \Ext i{R_{\m}}{M_{\m}}{C^{\m}} \cong \Ext iR{M_{\m}}C$.
\end{proof}

In particular, Lemma \ref{cartanei}(b) applies to cotilting modules, because each cotilting module is pure-injective by \cite{S}.

The next Lemma shows that colocalization works fine in the setting of cotilting modules of cofinite type:  

\begin{lemma}\label{dual} Let $T$ be an $n$-tilting module, $\m \in \mSpec R$, and $W$ be an injective cogenerator for $\rmod R$. Let $(-)^* = \Hom R-W$ and $\mathcal F = {}^\perp(T^\perp) \cap \rfmod R$. 

Then dual module $C = T^*$ is an $n$-cotilting $R$-module (of cofinite type), and its colocalization $C^{\m}$ is an $n$-cotilting $R_{\m}$-module isomorphic to the dual of the $n$-tilting $R_{\m}$-module $T_{\m}$. 
Moreover, ${}^{\perp}C^{\m} = \mathcal{F}_{\m}^{\intercal}$ in $\rmod R_{\m}$, so $C^{\m}$ is of cofinite type.
\end{lemma}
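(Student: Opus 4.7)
The plan is to reduce the assertions to the tilting/cotilting duality over $R$ and $R_{\m}$, combined with the change-of-rings isomorphisms furnished by Lemma \ref{cartanei}. The statement that $C=T^*$ is an $n$-cotilting $R$-module of cofinite type, with induced cotilting class $\mathcal F^{\intercal}$, is just a restatement of the dual-module construction recalled in the Preliminaries, so I will only cite it.

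To identify $C^{\m}$ as a dual over $R_{\m}$, I will use Example \ref{ex}.2 with $N=T$, obtaining $C^{\m} = (T^*)^{\m} \cong (T\otimes_R R_{\m})^* = T_{\m}^*$, where the asterisk still stands for $\Hom R-W$. I will then set $V := W^{\m}$ and verify that $V$ is an injective cogenerator of $\rmod{R_{\m}}$: injectivity follows from flatness of $R_{\m}$ over $R$ and injectivity of $W$, while the cogenerator property is immediate from the adjunction $\Hom{R_{\m}}{N}{V} \cong \Hom RNW$ that holds for any $R_{\m}$-module $N$. The same adjunction, applied with $N = T_{\m}$, yields $C^{\m} \cong \Hom{R_{\m}}{T_{\m}}{V}$; that is, $C^{\m}$ is the $R_{\m}$-dual of the $n$-tilting $R_{\m}$-module $T_{\m}$ with respect to $V$. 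Applying the dual-module construction inside $\rmod{R_{\m}}$ therefore shows that $C^{\m}$ is an $n$-cotilting $R_{\m}$-module.

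For the identification of the cotilting class, I will compute ${}^{\perp}C^{\m}$ in $\rmod{R_{\m}}$ directly from Lemma \ref{cartanei}. For every $R_{\m}$-module $M$ and $i\geq 1$, part (a) of that lemma gives $\Ext i{R_{\m}}M{C^{\m}} \cong \Ext iRM{C^{\m}}$, and its concluding clause further yields $\Ext iRM{C^{\m}} \cong \Ext iR{M_{\m}}C = \Ext iRMC$. Hence ${}^{\perp}C^{\m}$ (over $R_{\m}$) coincides with $\mathcal F^{\intercal} \cap \rmod{R_{\m}}$. On the other hand, for $F \in \mathcal F$ a finitely generated projective resolution localizes to such a resolution of $F_{\m}$, so $\Tor iRFN \cong \Tor i{R_{\m}}{F_{\m}}N$ for every $R_{\m}$-module $N$, which gives $\mathcal F^{\intercal} \cap \rmod{R_{\m}} = \mathcal F_{\m}^{\intercal}$. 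Since $\mathcal F_{\m}$ consists of strongly finitely presented $R_{\m}$-modules of projective dimension $\leq n$, this also exhibits $C^{\m}$ as a cotilting module of cofinite type.

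The main bookkeeping obstacle is keeping track of which ring each $\Hom$, $\Ext$ or $\Tor$ is taken over; the key identification is the adjunction isomorphism $\Hom R-W \cong \Hom{R_{\m}}{-}{V}$ on $\rmod{R_{\m}}$, which both makes $V = W^{\m}$ an injective cogenerator of $\rmod{R_{\m}}$ and realises $C^{\m}$ as a dual inside the category $\rmod{R_{\m}}$. Once this is in place, axioms (C1)--(C3) for $C^{\m}$ transfer from axioms (T1)--(T3) for $T_{\m}$ in exactly the same way as (C1)--(C3) for $C$ do from (T1)--(T3) for $T$.
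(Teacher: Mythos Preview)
Your argument is correct and follows the same overall strategy as the paper: identify $C^{\m}$ as the $R_{\m}$-dual of the $n$-tilting module $T_{\m}$ with respect to an injective cogenerator of $\rmod{R_{\m}}$, then invoke the tilting/cotilting duality over $R_{\m}$, and finally compute the cotilting class via the change-of-rings isomorphisms of Lemma~\ref{cartanei}.

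The one genuine difference is your choice of the injective cogenerator over $R_{\m}$. The paper takes the \emph{localization} $W_{\m}$ and uses a Nakayama-lemma argument (essentially, $\Hom R{T_{\m}}{E(R/\p)} = 0$ whenever $\p \nsubseteq \m$) to rewrite $(T_{\m})^* = \Hom R{T_{\m}}W$ as $\Hom{R_{\m}}{T_{\m}}{W_{\m}}$; this step implicitly relies on a decomposition of $W$ into indecomposable injectives. You instead take the \emph{colocalization} $V = W^{\m}$ and use the tensor--Hom adjunction $\Hom{R_{\m}}{-}{V} \cong \Hom R-W$ on $\rmod{R_{\m}}$, which simultaneously shows that $V$ is an injective cogenerator and that $C^{\m} \cong \Hom{R_{\m}}{T_{\m}}V$. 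Your route is cleaner and avoids any structural hypothesis on $W$; the paper's route has the minor advantage of identifying the cogenerator concretely as $W_{\m}$. For the identification ${}^{\perp}C^{\m} = \mathcal F_{\m}^{\intercal}$, both proofs pass through ${}^{\perp}C \cap \rmod{R_{\m}}$ via Lemma~\ref{cartanei}; you then compare $\Tor iRF-$ with $\Tor i{R_{\m}}{F_{\m}}-$ directly by localizing a projective resolution, whereas the paper detours through the Ext--Tor duality and part~(b) of Lemma~\ref{cartanei}. These are equivalent computations.
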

\begin{proof} The first claim has already been noticed above (see \cite[15.2]{GT}). By \cite[13.50]{GT}, $T_{\m}$ is an $n$-tilting $R_{\m}$-module. Moreover, $C^{\m} = \Hom R{R_{\m}}{T^*} \cong (T_{\m})^* = \Hom {R_{\m}}{T_{\m}}{W_{\m}}$, because $\Hom R{T_\m}{E(R/\p)} = 0$ for all $\p \nsubseteq \m$ by the Nakayama lemma. Since $W_{\m}$ is an injective cogenerator for $\rmod R_{\m}$, applying \cite[15.2]{GT} again, we see that $C^{\m}$ is an $n$-cotilting $R_{\m}$-module.      

Finally, let $M \in \rmod R_{\m}$ and $i \geq 1$. Then $\Ext i{R_{\m}}M{C^{\m}} \cong  \Ext iR{M}C$ by Lemma \ref{cartanei}(a), so
$M \in {}^{\perp}C^{\m}$ iff  $M \in {}^{\perp}C$ iff $M \in \mathcal F ^{\intercal} = {}^{\perp} \mathcal{F}^*$ by the Ext-Tor duality. The latter is equivalent to $M \in {}^{\perp} (\mathcal F ^*)^{\m} = {}^{\perp} (\mathcal F _{\m})^* = \mathcal{F}_{\m}^{\intercal}$ (see Lemma \ref{cartanei}(b)). Since $\mathcal{F}_{\m} \subseteq \rfmod R_{\m}$ consists of $R_{\m}$-modules of projective dimension $\leq n$, the assertion follows.
\end{proof}
 
Unlike localization, the colocalization is not exact in general, because $R_{\m}$ is flat, but it need not be projective. However, colocalization preserves exactness of all short exact sequences of the form $0 \to A \to B \to C \to 0$ with $A$ pure-injective. This is essential for our next lemma: 

\begin{lemma} \label{colocotilt} Let $\m \in \mSpec R$ and $n < \omega$. Let $T$ be an $\mathcal F$-filtered $n$-tilting module, where $\mathcal F = {}^\perp (T^{\perp}) \cap \rfmod R$. Let $C = T^*$. 
Then the $n$-cotilting $R_{\m}$-module $C^{\m}$ is $(\mathcal{F}^*)^{\m}$-cofiltered.
\end{lemma}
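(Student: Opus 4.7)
The plan is to produce the desired $(\mathcal F^*)^{\m}$-cofiltration of $C^{\m}$ in two stages: first, dualize the given $\mathcal F$-filtration of $T$ to exhibit $C = T^*$ as $\mathcal F^*$-cofiltered; then apply $(-)^{\m}$ and verify that this cofiltration structure is transported across colocalization.

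Fix an $\mathcal F$-filtration $(T_\alpha \mid \alpha \leq \mu)$ of $T$, with $T_{\alpha+1}/T_\alpha \cong F_\alpha \in \mathcal F$ at each successor and $T_\alpha = \bigcup_{\beta < \alpha} T_\beta$ at each limit. Since $W$ is injective, the duality $(-)^* = \Hom R{-}W$ is exact, so the inclusions $T_\alpha \hookrightarrow T_\beta$ dualize to epimorphisms $g_{\alpha\beta}: T_\beta^* \to T_\alpha^*$ fitting, at each successor, into a short exact sequence
$$0 \to F_\alpha^* \to T_{\alpha+1}^* \to T_\alpha^* \to 0,$$
so $\Ker{g_{\alpha,\alpha+1}} \cong F_\alpha^* \in \mathcal F^*$. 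Exactness of $(-)^*$ also sends the union at a limit $\alpha$ to the inverse limit $T_\alpha^* \cong \varprojlim_{\beta < \alpha} T_\beta^*$. Hence $(T_\alpha^*, g_{\alpha\beta})$ is a continuous inverse system of epimorphisms realizing $C = T_\mu^*$ as $\mathcal F^*$-cofiltered.

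Next, apply $(-)^{\m} = \Hom R{R_{\m}}{-}$ to the whole system. Every dual module is pure-injective, and $R_{\m}$ is flat over $R$, so by the vanishing noted inside the proof of Lemma \ref{cartanei}(b) (applied with $S = R \setminus \m$ and $B = F_\alpha^*$), $\Ext 1R{R_{\m}}{F_\alpha^*} = 0$ for every $\alpha$. Consequently $(-)^{\m}$ preserves exactness at the successor steps, yielding
$$0 \to (F_\alpha^*)^{\m} \to (T_{\alpha+1}^*)^{\m} \to (T_\alpha^*)^{\m} \to 0$$
with $(F_\alpha^*)^{\m} \in (\mathcal F^*)^{\m}$ and $g_{\alpha\beta}^{\m}$ still an epimorphism. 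Continuity at limit ordinals is automatic, since $\Hom R{R_{\m}}{-}$ commutes with arbitrary inverse limits in its second variable, giving $(T_\alpha^*)^{\m} \cong \varprojlim_{\beta < \alpha} (T_\beta^*)^{\m}$ for all limit $\alpha \leq \mu$. Thus $\bigl((T_\alpha^*)^{\m}, g_{\alpha\beta}^{\m}\bigr)$ witnesses that $C^{\m} = (T_\mu^*)^{\m}$ is $(\mathcal F^*)^{\m}$-cofiltered, as claimed. The only subtle point is preservation of surjectivity (equivalently, exactness) under $(-)^{\m}$ at the successor stages, which relies precisely on pure-injectivity of the duals $F_\alpha^*$ combined with flatness of $R_{\m}$; the limit stages come for free from the fact that Hom commutes with inverse limits.
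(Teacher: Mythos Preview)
Your proof is correct and follows essentially the same route as the paper's own argument: start from the $\mathcal F^*$-cofiltration of $C=T^*$, apply $\Hom R{R_{\m}}{-}$, use pure-injectivity of the successive kernels together with flatness of $R_{\m}$ to keep surjectivity at successor steps, and invoke preservation of inverse limits by $\Hom$ for continuity at limit steps. The only cosmetic difference is that you spell out explicitly how the $\mathcal F^*$-cofiltration of $C$ arises by dualizing the $\mathcal F$-filtration of $T$, whereas the paper simply cites this as a known fact from the preliminaries.
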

\begin{proof} By the assumption, $C$ is  $\mathcal{F}^*$-cofiltered, with a cofiltration $(C_\alpha \mid \alpha \leq \mu)$.
Let $\mu$ be an ordinal and  $\mathcal{I}=\{C_\alpha, g_{\alpha\beta} \mid \alpha \leq \beta \leq \mu\}$ be the corresponding  inverse
system of modules such that $C_0=0$, $C_\alpha=\varprojlim_{ \beta \leq \alpha}C_\beta$ for all limit ordinals $\alpha \leq \mu$, and
$\Ker {g_{\alpha,\alpha+1}}$ is isomorphic to an element of $\mathcal{F}^*$ for all $\alpha \leq \mu$.

Define $$g_{\alpha\beta}^{\m} = \Hom R{R_{\m}}{g_{\alpha\beta}}: (C_\beta)^{\m} \to (C_\alpha)^{\m}$$ for all $\alpha \leq \beta \leq \mu$.
Then $g_{\alpha\beta}^{\m}$ is an epimorphism for all $\alpha \leq \beta \leq \mu$, because $R_{\m}$ is flat and $\Ker {g_{\alpha,\alpha+1}}$
is pure injective. Since the covariant $\Hom R{R_{\m}}-$ functor commutes with inverse limits,
we obtain $C_0^{\m}=0$ and $(C_\alpha)^{\m}=\varprojlim_{\beta \leq \alpha}(C_\beta)^{\m}$ for all limit ordinals $\alpha \leq \mu$.
Also $\Ker {g_{\alpha,\alpha+1}}^{\m}$ is isomorphic to an element of  $(\mathcal{F}^*)^{\m}$ for each $\alpha \leq \mu$.
It follows that $$\mathcal{I}^{\m}=\{(C_\alpha)^{\m}, g_{\alpha\beta}^{\m}:\alpha \leq \beta \leq \mu\}$$ is a continuous inverse system
of  $R_{\m}$-modules witnessing that the colocalized module $C^{\m}$ is $(\mathcal{F}^*)^{\m}$-cofiltered.
\end{proof}

\medskip
In \cite[Theorem 2.5]{SB1}, Bazzoni proved that when $R$ is a commutative domain and $C$ is a $1$-cotilting module, then
the colocalized module $C^{\m}$ is a $1$-cotilting $R_{\m}$-module for each $\m \in \mSpec R$, and $\prod_{\m\in \mSpec R}C^{\m}$ is a $1$-cotilting module equivalent to $C$. The proof easily extends to an arbitrary commutative ring $R$. However, the extension of the result to $n$-cotilting modules over noetherian rings for $n > 1$ needs more work.

\begin{lemma} \label{coloc} Let $n < \omega$ and $C$ be an $n$-cotilting module which is of cofinite type, so ${}^{\perp}C = \mathcal F ^{\intercal}$ for some $\mathcal{F} \subseteq \rfmod R$ consisting of modules of projective dimension $\leq n$. Let $\m \in \mSpec R$. Then as an $R$-module, $C^{\m}$ is pure-injective, and satisfies conditions (C1) and (C2). Moreover, $\Ext iR{C^{\m}}C=0$ for each $i>0$ and each $\m \in \mSpec R$.
\end{lemma}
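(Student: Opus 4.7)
The plan is to derive all four conclusions by reducing, via the cofinite-type hypothesis, to the concrete dual situation handled in Lemma \ref{dual}. By the characterization of cofinite type recalled in the Preliminaries, there exists an $n$-tilting module $T$ with $\mathcal F = {}^{\perp}(T^{\perp}) \cap \rfmod R$ such that $C$ is equivalent to $T^*$. Equivalent cotilting modules share the same $\Prod$-class (the dual of the classical $\Add$-equality for equivalent tilting modules), so $C$ is a direct summand of $(T^*)^{\kappa}$ for some cardinal $\kappa$; since $\Hom R{R_{\m}}-$ preserves products, $C^{\m}$ is then a direct summand of $((T^*)^{\m})^{\kappa}$. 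Lemma \ref{dual} identifies $(T^*)^{\m} \cong (T_{\m})^*$ as an $n$-cotilting $R_{\m}$-module with cotilting class $\mathcal F_{\m}^{\intercal}$; this will be the anchor throughout the argument.

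Pure-injectivity of $C^{\m}$ is then immediate from Example \ref{ex}(2), as every cotilting module is pure-injective. Condition (C1) follows directly from Lemma \ref{cartanei}: for every $M \in \rmod R$, $\Ext{n+1}RM{C^{\m}} \cong \Ext{n+1}R{M_{\m}}C = 0$, since $\id{C} \leq n$.

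The central step is the third assertion, from which (C2) will be deduced. Since $(T^*)^{\m}$ lies in its own cotilting class $\mathcal F_{\m}^{\intercal}$, which is closed under products and direct summands, we obtain $C^{\m} \in \mathcal F_{\m}^{\intercal}$. For each $X \in \mathcal F$ and $i \geq 1$, the base-change identity (valid because $C^{\m}$ is an $R_{\m}$-module, via localising an $R$-projective resolution of $X$) gives $\Tor iRX{C^{\m}} \cong \Tor i{R_{\m}}{X_{\m}}{C^{\m}} = 0$, as $X_{\m} \in \mathcal F_{\m}$. Hence $C^{\m} \in \mathcal F^{\intercal} = {}^{\perp}C$, which is exactly $\Ext iR{C^{\m}}C = 0$. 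Condition (C2) then follows by noting that $\mathcal F \subseteq \rfmod R$ makes $\mathcal F^{\intercal}$ closed under arbitrary products, so $(C^{\m})^{\kappa} \in {}^{\perp}C$ for every $\kappa$; a second application of Lemma \ref{cartanei} yields $\Ext iR{(C^{\m})^{\kappa}}{C^{\m}} \cong \Ext iR{(C^{\m})^{\kappa}}C = 0$, using that $(C^{\m})^{\kappa}$ is its own localisation at $\m$. The only delicate ingredient I expect to justify carefully is the equivalence-implies-$\Prod$-equality step; once that is accepted, the rest is a mechanical assembly of Lemmas \ref{cartanei} and \ref{dual}.
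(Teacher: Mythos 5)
Your proof is correct and rests on the same two pillars as the paper's argument (Lemma \ref{cartanei} for moving Ext between $R$ and $R_{\m}$, and the dual $T^*$ of a tilting module as a concrete cofinite-type representative), but it carries the individual claims along somewhat different paths. For (C1) the paper observes that the cotilting $R_{\m}$-module $C^{\m}$ has injective dimension $\leq n$ over $R_{\m}$ and that injective $R_{\m}$-modules remain injective over $R$, while you read (C1) directly off $\id{C}\leq n$ via the isomorphism $\Ext iRM{C^{\m}} \cong \Ext iR{M_{\m}}C$; both are fine. For $\Ext iR{C^{\m}}C = 0$ the paper applies Lemma \ref{cartanei}(b) in one line, whereas you route through $C^{\m} \in \mathcal F_{\m}^\intercal$, Tor base change, and $\mathcal F^\intercal = {}^\perp C$ --- sound, but this effectively re-proves the final clause of Lemma \ref{dual} rather than quoting it. The genuine contribution of your write-up is that you make explicit a step the paper leaves implicit: since $C$ is only equivalent to $T^*$, you need $\Prod C = \Prod T^*$ (the standard criterion for cotilting equivalence), together with the fact that $\Hom R{R_{\m}}{-}$ preserves products and direct summands, to transfer the conclusions of Lemma \ref{dual} about $(T^*)^{\m}$ over to $C^{\m}$. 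That is exactly the right observation, and your flagging it as the delicate ingredient is well placed.
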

\begin{proof} The pure-injectivity of $C^{\m}$ as an $R$-module has already been observed in Example \ref{ex}.2. By Lemma \ref{cartanei}(a) each injective $R_{\m}$-module is injective also as $R$-module, so $C^{\m}$ has injective dimension $\leq n$ by condition (C1) for the cotilting $R_{\m}$-module $C^m$. Moreover, for each $i>0$ and all cardinals $\kappa$, we have $\Ext iR{(C^{\m})^\kappa}{C^{\m}} = \Ext i{R_{\m}}{(C^{\m})^\kappa }{C^{\m}} = 0$, so (C2) holds for $C^{\m}$ viewed as an $R$-module. 
The equality $\Ext iR{C^{\m}}C=0$ now follows by Lemma \ref{cartanei}(b).
\end{proof}

\begin{lemma} \label{main} Let $n < \omega$ and $C$ be an $n$-cotilting module which is of cofinite
type. Then the module $D = \prod_{\m\in \mSpec R}C^{\m}$ is $n$-cotilting, and equivalent to $C$.
\end{lemma}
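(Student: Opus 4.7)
My plan is to verify the three cotilting conditions (C1)--(C3) for the module $D = \prod_{\m \in \mSpec R} C^{\m}$ and, in parallel, to establish the identity ${}^\perp D = {}^\perp C$; the latter immediately delivers the claimed equivalence. The central leverage will be cofinite type: writing ${}^\perp C = \mathcal{F}^\intercal$ with $\mathcal{F} \subseteq \rfmod R$ consisting of modules of projective dimension $\leq n$, the class ${}^\perp C$ is closed under arbitrary direct products, since $\Tor RiF-$ commutes with products for every $F \in \rfmod R$.

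I dispatch (C1), (C2), and ${}^\perp D = {}^\perp C$ in one block. For (C1), Lemma \ref{coloc} gives $\id{C^{\m}} \leq n$ as an $R$-module, and $\Ext{n+1}R{-}D \cong \prod_{\m} \Ext{n+1}R{-}{C^{\m}} = 0$ forces $\id D \leq n$. For ${}^\perp D = {}^\perp C$, I observe that $M \in {}^\perp D$ iff $\Ext iRM{C^{\m}} = 0$ for every $\m$ and every $i \geq 1$; by Lemma \ref{cartanei}(b) and pure-injectivity of $C$, this is equivalent to $\Ext i{R_{\m}}{M_{\m}}C = 0$ for every $\m$, i.e.\ $M_{\m} \in {}^\perp C = \mathcal{F}^\intercal$. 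Since $\Tor RiFM$ commutes with localization and a module over a commutative ring vanishes iff it vanishes at every maximal ideal, this is in turn equivalent to $M \in \mathcal{F}^\intercal = {}^\perp C$. Condition (C2) is then immediate: by Lemma \ref{coloc} each $C^{\m}$ lies in ${}^\perp C$, so $D \in {}^\perp C$ by the noted closure under products, and consequently $D^\kappa \in {}^\perp C = {}^\perp D$ for every cardinal $\kappa$.

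The main obstacle will be (C3). I plan to start from a (C3)-sequence $0 \to C_n \to \cdots \to C_0 \to W \to 0$ for $C$ and colocalize it at each maximal $\m$. All its terms are pure-injective (the $C_i \in \Prod C$ since $C$ is pure-injective, and $W$ is injective), and $R_{\m}$ is flat, so Lemma \ref{cartanei}(b) yields $\Ext jR{R_{\m}}{C_i} \cong \Ext j{R_{\m}}{R_{\m}}{C_i^{\m}} = 0$ for every $j \geq 1$, and similarly for $W$; a routine dimension shift along the syzygies of the sequence propagates the vanishing to every intermediate kernel. Hence $\Hom R{R_{\m}}-$ preserves exactness and produces $0 \to C_n^{\m} \to \cdots \to C_0^{\m} \to W^{\m} \to 0$. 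Taking the product over $\m$ (exact in $\rmod R$) yields an exact $R$-sequence $0 \to \prod_{\m} C_n^{\m} \to \cdots \to \prod_{\m} C_0^{\m} \to \prod_{\m} W^{\m} \to 0$ whose middle terms sit in $\Prod D$: if $C_i$ is a summand of $C^{I_i}$, then $C_i^{\m}$ is a summand of $(C^{\m})^{I_i}$, and the canonical identification $\prod_{\m}(C^{\m})^{I_i} \cong D^{I_i}$ realises $\prod_{\m} C_i^{\m}$ as a summand of $D^{I_i}$. The delicate point, on which the whole argument hinges, is showing that $\prod_{\m} W^{\m}$ is an injective cogenerator for $\rmod R$: each $W^{\m}$ is injective as an $R$-module by Lemma \ref{cartanei}(a), so under the paper's noetherian hypothesis the product remains injective, while the inclusion $E(R/\m) \subseteq W^{\m}$ for every $\m$ secures the cogenerator property. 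This completes (C3), and the identity ${}^\perp D = {}^\perp C$ already established finishes the equivalence.
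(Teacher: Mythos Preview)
Your argument is correct, and in a couple of places more direct than the paper's. Two points of comparison and one small correction:

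\medskip
\textbf{The identity ${}^\perp D = {}^\perp C$.} You obtain both inclusions at once via the local--global principle for $\mathrm{Tor}$: since ${}^\perp C = \mathcal F^\intercal$ with $\mathcal F \subseteq \rfmod R$, and $\Tor RiFM$ localizes, the condition $M_{\m} \in \mathcal F^\intercal$ for all $\m$ is equivalent to $M \in \mathcal F^\intercal$. The paper instead establishes only ${}^\perp C \subseteq {}^\perp D$ directly (writing $M_{\m}$ as a direct limit of finite powers of $M$ and using closure of cotilting classes under products and direct limits), and recovers the reverse inclusion indirectly from $\Cog_n(D) \subseteq \Cog_n(C)$ via a dimension-shift argument. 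Your route exploits cofinite type more fully and is cleaner; the paper's route has the mild advantage that the inclusion ${}^\perp C \subseteq {}^\perp D$ does not use cofinite type at all.

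\medskip
\textbf{Condition (C3).} You colocalize the (C3)-sequence of $C$ itself, noting that all terms and intermediate kernels satisfy $\Ext {\geq 1}R{R_{\m}}- = 0$ (the $C_i$ are pure-injective and $R_{\m}$ is flat, and the vanishing propagates downward by dimension shift from $C_n$). The paper instead passes through a tilting module $T$ with $C$ equivalent to $T^*$, localizes the (T3)-sequence of $T$, and then applies $(-)^*$; this keeps exactness manifest at every step but requires choosing a particular injective cogenerator and tracking $\Prod$ through the duality. Your approach avoids the detour through tilting theory entirely.

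\medskip
\textbf{One correction.} You write that ``under the paper's noetherian hypothesis the product $\prod_{\m} W^{\m}$ remains injective''. No hypothesis is needed here: arbitrary direct \emph{products} of injectives are injective over any ring (it is direct \emph{sums} that require $R$ noetherian). This matters because Lemma~\ref{main} is stated and proved in the paper \emph{before} the standing noetherian hypothesis is imposed in the next section, so your argument should not invoke it. With that remark removed, your proof of (C3) goes through over any commutative ring, as required. (For the cogenerator part, your claim $E(R/\m) \hookrightarrow W^{\m}$ is fine in general: $R/\m$ is already an $R_{\m}$-module, so $R/\m \cong (R/\m)^{\m} \hookrightarrow W^{\m}$ by left exactness of $\Hom R{R_{\m}}-$, and then injectivity of $W^{\m}$ extends this to $E(R/\m)$.)
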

\begin{proof} We will prove that $$\Cog_n(D)={}^{\perp}D={}^{\perp}C=\Cog_n(C).$$
The last equality holds since $C$ is an $n$-cotilting module.

First, we show that $\Cog_n(D)\subseteq \Cog_n(C)={}^{\perp}C$.
Let $M \in \Cog_n(D)$. There is a long exact sequence $$0 \to M \overset{f_0}\hookrightarrow D^{\beta_1} \overset{f_1}\to  \cdots \overset{f_{n-2}}\to D^{\beta_{n-1}} \overset{f_{n-1}}\to D^{\beta_n}.$$
We will show that $\Ext iR{M}C=0$ for each $i\geq 1$.

Consider the short exact sequences (where $i < n-1$ and $K_i =\im {f_i}$)
$$0 \to K_i \to D^{\beta_{i+1}} \to K_{i+1} \to 0.$$
Applying the functor $\Hom R-C$ we obtain the long exact sequences
$$\ldots \to \Ext 1R{K_{i+1}}C \to \Ext 1R{D^{\beta_{i+1}}}C \to \Ext 1R{K_i}C \to \ldots$$
$$\ldots \to \Ext iR{K_{i+1}}C \to \Ext iR{D^{\beta_{i+1}}}C \to \Ext iR{K_i}C \to$$
$$\to \Ext {i+1}R{K_{i+1}}C \to \Ext {i+1}R{D^{\beta_{i+1}}}C \to \Ext {i+1}R{K_{i}}C \to \ldots$$
We know that $D \in \Cog_n(C)$ since $C^{\m} \in \Cog_n(C)$ by Lemma \ref{coloc}. So
$$\Ext 1R{M}C \cong \Ext 2R{K_1}C \cong \dots \cong \Ext {n+1}R{K_n}C = 0$$ because $C$ has injective dimension $\leq n$.
Similarly, $\Ext iR{M}C = 0$ for each $i > 1$.

Next, we show that ${}^{\perp}C \subseteq {}^{\perp}D$. In fact, $M \in {}^{\perp}D$
if and only if $\Ext iRM{C^{\m}}=0$ for each $i\geq1$ and $\m\in \mSpec R$. By Lemma \ref{cartanei}(c), $\Ext iRM{C^{\m}} \cong \Ext iR{M_{\m}}C$.

Let $M \in {}^{\perp}C$. We claim that $M_{\m} \in {}^{\perp}C$. Since each flat module is a  direct limit of
finitely generated free modules, $M_{\m}=M\otimes_R R_{\m}=M\otimes_R \varinjlim_\alpha R^{n_\alpha}= \varinjlim_\alpha M\otimes_R R^{n_\alpha}=\varinjlim_\alpha M^{n_\alpha}$.
Since the cotilting class ${}^{\perp} C$ is closed under direct products and direct limits, the claim follows.

Finally, we show  that $D$ is an $n$-cotilting module. This will give the remaining equality ${}^{\perp}D=\Cog_n(D)$. We proceed by verifying conditions
(C1)-(C3) for $D$.

(C1) Since $C$ is $n$-cotilting, $i.d.(C)\leq n$. By Lemma \ref{coloc},$i.d.(C^{\m}) \leq n$ for each $\m \in \mSpec R$, whence $i.d.(D) \leq n$. 

(C2) We claim that $\Ext iR{D^\kappa}D=0$ for all  $i\geq1$. Since ${}^{\perp}C \subseteq {}^{\perp}D$, it suffices to prove that
$\Ext iR{D^\kappa}C=0$ for all $i > 0$.

It is enough to show that $\Ext iR{D}C=0$ for all $i\geq 1$. By the definition of $D$, this is equivalent to $\Ext iR{\prod_{\m\in \mSpec R}C^{\m}}C=0$. Again,
it suffices to prove that $\Ext iR{C^{\m}}C=0$ for every maximal ideal $\m$, but this holds by Lemma \ref{coloc}.

(C3) Since $C$ is of cofinite type, $C$ is equivalent to $T^*$ for an $n$-tilting module $T$. There exists a long exact sequence of the form
$$0 \to R \to T_0 \to \ldots \to T_n \to 0$$ where $T_i \in \Add(T)$ for all $i \leq n$. Localizing at a maximal ideal $\m$, we obtain the exact sequence
$$0 \to R_{\m} \to (T_0)_{\m} \to \ldots \to (T_n)_{\m} \to 0.$$

Clearly, $(T_i)_{\m} \in \Add(T_{\m})$ for each maximal ideal $\m$, since localization commutes with direct sums.

Applying the exact functor $(-)^* = \Hom R-{\bigoplus_{\m\in \mSpec R} E_{R_{\m}}(R_{\m}/\m_{\m})}$ to the localized exact sequence, we obtain
$$0 \to (T_n)_{\m}^* \to \ldots \to (T_0)_{\m}^* \to R_{\m}^* \to 0$$
where $((T_i)_{\m})^* \in \Prod(T_{\m}^*)$.

Since  $(C^\prime)^{\m} \in \Prod C^{\m}$ where $C^\prime =T^*$, also $((T_i)_{\m})^* \in \Prod (C^\prime)^{\m}$ for all $i\leq n$.

From the previous exact sequence we obtain
$$0 \to \prod_{\m\in \mSpec R}((T_n)_{\m})^* \to \ldots \to \prod_{\m\in \mSpec R}((T_0)_{\m})^* \to \prod_{\m\in \mSpec R} (R_{\m})^* \to 0.$$
So for each $i \leq n$, $\prod_{\m\in \mSpec R}((T_i)_{\m})^* \leq_{\oplus}\prod_{\m\in \mSpec R}(C^{\m})^{\alpha_i}=D^{\alpha_i}$ for a cardinal $\alpha_i$.

It remains to show that $E= \prod_{\m\in \mSpec R}(R_{\m})^* $ is an injective cogenerator for $\rmod R$. However, $R_{\m}$ is flat, hence $(R_{\m})^*$ is injective, and so is $E$.
Moreover, $(R_{\m})^*=\Hom R{R_{\m}}{\bigoplus_{\m\in \mSpec R} E_{R_{\m}}(R_{\m}/\m_{\m})}$ has a direct summand $\Hom R{R_{\m}}{E_{R_{\m}}(R_{\m}/\m_{\m})} = \Hom {R_{\m}}{R_{\m}}{E_{R_{\m}}(R_{\m}/\m_{\m})} = E_{R_{\m}}(R_{\m}/\m_{\m})$, and
the latter module contains the simple module $R/\m$. It follows that $E$ is a cogenerator for $\rmod R$.
\end{proof}

Lemmas \ref{coloc} and \ref{main} yield

\begin{theorem} \label{yield} Let $n < \omega$ and $C$ be an $n$-cotilting module of cofinite type. Then for each $\m \in \mSpec R$, $C^{\m}$ is an $n$-cotilting $R_{\m}$-module of cofinite type, and $D = \prod_{\m\in \mSpec R}C^{\m}$ is an $n$-cotilting module equivalent to $C$.
\end{theorem}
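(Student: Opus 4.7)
The plan is to deduce the theorem by combining Lemmas \ref{dual}, \ref{coloc}, and \ref{main}; the mild bridge needed is the passage from a given $n$-cotilting module of cofinite type to a representative dual module $T^*$, after which each assertion drops out of one of the three lemmas.

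For the first assertion, I would begin by exploiting the cofinite-type hypothesis: as recalled in the preliminaries, $C$ is equivalent to $T^*$ for some $n$-tilting module $T$, which by Theorem \ref{ftype} may be taken to be $\mathcal{F}$-filtered with $\mathcal{F} = {}^{\perp}(T^{\perp}) \cap \rfmod R$. Applied to this $T^*$, Lemma \ref{dual} already delivers that $(T^*)^{\m}$ is an $n$-cotilting $R_{\m}$-module of cofinite type, with orthogonal class $\mathcal{F}_{\m}^{\intercal}$. To transfer this to the given $C$, I would invoke Lemma \ref{cartanei}(b) --- applicable since $C$ is pure-injective by \cite{S} --- to obtain, for every $M \in \rmod R_{\m}$ and $i \geq 1$, the isomorphism $\Ext i{R_{\m}}{M}{C^{\m}} \cong \Ext iRMC$, and likewise with $T^*$ in place of $C$. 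Since ${}^{\perp}C = {}^{\perp}T^*$ in $\rmod R$, this forces ${}^{\perp}C^{\m} = {}^{\perp}(T^*)^{\m} = \mathcal{F}_{\m}^{\intercal}$ in $\rmod R_{\m}$; combining this coincidence of orthogonal classes with the verification of (C1) and (C2) for $C^{\m}$ supplied by Lemma \ref{coloc}, I conclude that $C^{\m}$ is itself $n$-cotilting over $R_{\m}$, of cofinite type.

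The second assertion --- that $D = \prod_{\m \in \mSpec R} C^{\m}$ is $n$-cotilting and equivalent to $C$ --- is exactly the content of Lemma \ref{main}, whose hypothesis is precisely that $C$ is of cofinite type, so no additional work is required.

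The main obstacle I anticipate is the equivalence-invariance of the colocalization: one must confirm that the $R_{\m}$-cotilting structure of $C^{\m}$ is not sensitive to the choice of representative in the equivalence class of $C$, since Lemma \ref{dual} is stated for a genuine dual $T^*$ rather than for an arbitrary equivalent cotilting module. Lemma \ref{cartanei}(b) is the tool designed exactly to overcome this, since it expresses $\Ext i{R_{\m}}{-}{C^{\m}}$ intrinsically in terms of $\Ext iR{-}{C}$ and thereby reduces the question to the already-known equality ${}^{\perp}C = {}^{\perp}T^*$ in $\rmod R$.
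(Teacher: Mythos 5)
Your proof identifies the right obstruction --- the passage from the specific dual module $T^*$ of Lemma \ref{dual} to an arbitrary equivalent cotilting module $C$ --- and your Ext computation via Lemma \ref{cartanei}(b) is correct: for every $R_{\m}$-module $M$, $\Ext{i}{R_{\m}}{M}{C^{\m}} \cong \Ext{i}{R}{M}{C} \cong \Ext{i}{R}{M}{T^*} \cong \Ext{i}{R_{\m}}{M}{(T^*)^{\m}}$, so indeed ${}^{\perp}C^{\m} = {}^{\perp}(T^*)^{\m} = \mathcal{F}_{\m}^{\intercal}$. However, the subsequent deduction --- that this equality of orthogonal classes together with (C1) and (C2) for $C^{\m}$ forces $C^{\m}$ to be $n$-cotilting over $R_{\m}$ --- has a gap: you never verify (C3), and equality of $\perp$-classes with a cotilting module does not by itself make a module cotilting. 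From ${}^{\perp}C^{\m} = {}^{\perp}(T^*)^{\m}$ and (C1), (C2) one gets $C^{\m} \in \mathcal{C}_{\m} \cap \mathcal{C}_{\m}^{\perp} = \Prod{(T^*)^{\m}}$, but nothing in your argument shows the reverse containment $(T^*)^{\m} \in \Prod{C^{\m}}$ or, equivalently, that the witnessing exact sequence in (C3) for $(T^*)^{\m}$ can be replaced by one with terms in $\Prod{C^{\m}}$. Bazzoni's criterion ${}^{\perp}C^{\m} = \Cog_n(C^{\m})$ is likewise not established, since ${}^{\perp}C^{\m} \subseteq \Cog_n(C^{\m})$ is exactly what is missing.

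The clean way to close the gap is to work with $\Prod$-classes rather than $\perp$-classes. Equivalent cotilting modules have the same $\Prod$-class (this is standard: $\Prod{C} = {}^{\perp}C \cap ({}^{\perp}C)^{\perp}$, so it only depends on ${}^{\perp}C$), hence $\Prod{C} = \Prod{T^*}$ in $\rmod R$. Since $\Hom{R}{R_{\m}}{-}$ is additive and commutes with arbitrary products, it takes direct summands of products to direct summands of products; thus $C \in \Prod{T^*}$ and $T^* \in \Prod{C}$ yield $C^{\m} \in \Prod{(T^*)^{\m}}$ and $(T^*)^{\m} \in \Prod{C^{\m}}$, i.e., $\Prod{C^{\m}} = \Prod{(T^*)^{\m}}$ in $\rmod R_{\m}$. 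A module sharing its $\Prod$-class with an $n$-cotilting module is $n$-cotilting and equivalent to it (the (C3) sequence for $(T^*)^{\m}$ then has its terms in $\Prod{C^{\m}}$, and (C1), (C2) are inherited from $\Prod{(T^*)^{\m}}$), so $C^{\m}$ is an $n$-cotilting $R_{\m}$-module, and $\,{}^{\perp}C^{\m} = \mathcal{F}_{\m}^{\intercal}$ shows it is of cofinite type. Note that the paper itself is silent on this point: the displayed proof is just ``Lemmas \ref{coloc} and \ref{main} yield,'' and the proof of Lemma \ref{coloc} already invokes ``condition (C1) for the cotilting $R_{\m}$-module $C^{\m}$,'' so the passage from $T^*$ (handled in Lemma \ref{dual}) to an arbitrary $C$ of cofinite type is an implicit step in the paper as well. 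Your second assertion (for $D = \prod_{\m} C^{\m}$) is correctly reduced to Lemma \ref{main}.
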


\section{The noetherian case}

\emph{In this section, we will assume that $R$ is a commutative noetherian ring.} Then all cotilting modules are of cofinite type by \cite[Theorem 4.2]{APST}. We are going to investigate the relation among the $R_{\m}$-modules $C^{\m}$ ($\m \in \mSpec R$) from  Theorem \ref{yield}, the goal being to obtain a complete characterization of cotilting modules in terms of compatible families of their colocalizations.

First, we recall that in the noetherian setting, cotilting classes correspond 1-1 to characteristic sequences of sets of prime ideals:

\begin{definition} \label{characts} Let $n < \omega$ and $\mathcal P = (P_0,\dots,P_{n-1})$ be sequence of subsets of $\Spec R$ such that
\begin{itemize}
\item[\rm{(i)}] $P_i$ is a \emph{lower subset} of $\Spec R$ for each $i < n$ (i.e., $\q \subseteq \p \in P_i$ implies $\q \in P_i$ for all $\q \in \Spec R$);
\item[\rm{(ii)}] $P_i \subseteq P_{i+1}$ for all $i < n-1$;
\item[\rm{(iii)}] $\Ass \Omega ^{-i}(R) \subseteq P_i$ for each $i < n$.
\end{itemize}
Then $\mathcal P$ is called a \emph{characteristic sequence} (of length $n$) in $\Spec R$.
\end{definition}

Note that if $P$ is a lower subset of $\Spec R$ and  $( M_\alpha \mid \alpha < \kappa )$ is a family of modules such that $\Ass M_\alpha \subseteq P$ for all $\alpha$, then $\Ass \prod_{\alpha < \kappa} M_\alpha \subseteq P$. Indeed, for each $\q \in \Ass \prod_{\alpha < \kappa} M_\alpha$, there is a non-zero homomorphism from $E(R/{\q})$ to $E(R/{\p})$ for some $\p \in P$, whence $\q \subseteq \p$ and $\q \in P$. Similarly, for each $i < \omega$, if  $\Ass \Omega ^{-i} M_\alpha \subseteq P$ for all $\alpha < \kappa$, then $\Ass \Omega ^{-i} (\prod_{\alpha < \kappa} M_\alpha) \subseteq P$.

Also notice that if $R$ is Gorenstein, then condition (iii) just says that for each $i \leq i.d.(R)$, the set $P_i$ contains all prime ideals of height $i$.

\medskip
The following result was proved in \cite{APST} (see also \cite[\S 16]{GT} and \cite{DT}). Here, for $i < n < \omega$ and an $n$-cotilting class $\mathcal C$, we define $\mathcal C _{(i)} = \{ M \in \rmod R \mid  \Omega^j (M) \in \mathcal C \hbox{ for all } j > i \}$ (so in particular, $\mathcal C _{(0)} = \mathcal C$). Notice that if $C$ is an $n$-cotilting module such that $\mathcal C = {}^{\perp} C$, then $\mathcal C _{(i)} = \{ M \in \rmod R \mid  \Ext jRMC = 0 \hbox{ for all } j > i \}$; this is an $(n-i)$-cotilting class (see \cite[15.13]{GT}).

\begin{lemma} \label{apst} \cite{APST} Let $n < \omega$. There is a 1-1 correspondence between $n$-cotilting classes $\mathcal C$ and characteristic sequences $\mathcal P = (P_0,\dots ,P_{n-1})$ of length $n$ in $\Spec R$ given by the mutually inverse assignments
$$\mathcal C \mapsto \mathcal P _{\mathcal C} = (\Ass \mathcal C _{(0)}, \dots , \Ass \mathcal C _{(n-1)})$$ and
$$\mathcal P \mapsto \mathcal C _{\mathcal P} = \{ M \in \rmod R \mid  \Ass \Omega ^{-i} M \subseteq P_i \hbox{ for all } i < n \}.$$
Each $n$-cotilting class $\mathcal C$ is of cofinite type. The $n$-tilting class $\mathcal T$ corresponding to the characteristic sequence $\mathcal P = (P_0,\dots ,P_{n-1})$ is
$$\mathcal T = \{ N \in \rmod R \mid  \Tor RiM{R/\p} = 0 \mbox{ for all } p \in \Spec R \setminus P_i \hbox{ and } i < n \}.$$
Moreover, for each $i < n$ and $\p \in \Spec R$, we have $\p \in \Ass \mathcal C _{(i)}$, iff $E(R/{\p}) \in \mathcal C _{(i)}$.
\end{lemma}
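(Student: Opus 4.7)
The lemma has four interrelated assertions, and the backbone is the cofinite-type statement: every $n$-cotilting class over a commutative noetherian ring has the form $\mathcal F ^\intercal$ for some resolving subcategory $\mathcal F \subseteq \rfmod R$ consisting of modules of projective dimension $\leq n$. For this I would combine the finite type theorem (Theorem \ref{ftype}) applied on the tilting side with a lifting argument showing that the cotilting cotorsion pair is cogenerated by a set of finitely presented modules in the noetherian setting; this is the main input from \cite{APST}, and I would treat it as a black box. Granting it, the 1-1 correspondence reduces to a purely combinatorial classification of such $\mathcal F$ by characteristic sequences.

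For this classification I would set up the assignment $P_i(\mathcal F) = \{\p \in \Spec R \mid \Omega ^i(R/\p) \in \mathcal F\}$ for $i < n$. Conditions (i)--(iii) of Definition \ref{characts} should be verified directly: (i) the lower-subset property follows from a prime filtration of $\p/\q$ (when $\q \subseteq \p$) fed into the short exact sequence $0 \to \p/\q \to R/\q \to R/\p \to 0$ and the closure of $\mathcal F$ under kernels and direct summands; (ii) $P_i \subseteq P_{i+1}$ follows from $R \in \mathcal F$ together with the syzygy short exact sequence; (iii) $\Ass \Omega ^{-i}(R) \subseteq P_i$ comes from reading $R \in \mathcal F$ off its minimal injective resolution. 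The inverse sends $\mathcal P$ to the smallest resolving $\mathcal F(\mathcal P) \subseteq \rfmod R$ of projective dimension $\leq n$ containing the $\Omega ^i(R/\p)$ for $\p \in P_i$; prime filtrations reduce membership in $\mathcal F(\mathcal P)$ to checking prime quotients, yielding the bijection on the level of resolving subcategories.

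The hardest technical step is to match the two descriptions of the cotilting class, namely to show $\mathcal F(\mathcal P)^\intercal = \mathcal C _{\mathcal P}$. This would be a dimension-shifting argument: $M \in \mathcal F(\mathcal P)^\intercal$ is equivalent to $\Ext{i+1}R{R/\p}M = 0$ for all $\p \notin P_i$ and $i < n$, and the latter is equivalent to $\p \notin \Ass \Omega ^{-i-1}(M)$ via the standard fact that over a noetherian ring $\Hom R{R/\p}N \neq 0$ iff $\p \in \Ass N$, applied to the $(i+1)$-st cosyzygy of $M$. The subtle bookkeeping of indices, together with the use of condition (iii) to handle the boundary behaviour at $i = 0$, is the main obstacle I anticipate. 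The \emph{moreover} claim ($\p \in \Ass \mathcal C _{(i)}$ iff $E(R/\p) \in \mathcal C _{(i)}$) then drops out of $\Ass E(R/\p) = \{\p\}$ and the injectivity of $E(R/\p)$. Finally, the Tor description of the tilting class $\mathcal T$ follows symmetrically: $N \in \mathcal F(\mathcal P)^\perp$ iff $\Tor RiN{R/\p} = 0$ for all $\p \notin P_i$ and $i < n$, again by the same dimension shift applied to the generators $\Omega^i(R/\p)$ of $\mathcal F(\mathcal P)$.
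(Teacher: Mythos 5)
The paper itself gives no proof of this lemma---it is stated with a citation to \cite{APST} and no proof environment follows---so there is nothing internal to compare your sketch against; you are attempting to reprove a result the paper takes as given. Your high-level plan is reasonable and does roughly match what \cite{APST} actually does: the cofinite-type theorem is the main input, and the bijection passes through a classification of resolving subcategories of $\rfmod R$ of bounded projective dimension (which is where \cite{DT} enters). However, the step you yourself flag as hardest is where the sketch has a genuine gap, not just missing bookkeeping.

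You claim that $M \in \mathcal F(\mathcal P)^\intercal$---a Tor-vanishing condition---translates into $\Ext{i+1}R{R/\p}M = 0$ for $\p \notin P_i$, and that the latter is equivalent to $\p \notin \Ass{\Omega^{-i-1}(M)}$ ``via the standard fact that over a noetherian ring $\Hom R{R/\p}N \neq 0$ iff $\p \in \Ass N$.'' That fact is false: $\Hom R{R/\p}N \neq 0$ iff $\p$ is contained in \emph{some} associated prime of $N$, which is strictly weaker (take $R = k[x]$, $\p = 0$, $N = R/(x)$). The correct invariant is the Bass number: $\p \in \Ass{\Omega^{-i}(M)}$ iff $\Ext i{R_\p}{k(\p)}{M_\p} \neq 0$, which is a \emph{local} condition. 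Passing from there to $\Ext jR{R/\p}M$, and then further to the Tor-vanishing that defines $\mathcal F^\intercal$ for the typically infinitely generated $M$, requires the pure-injectivity of cotilting modules together with Matlis/Auslander--Bridger-type dualities relating $\Tor $ and $\Ext$ against finitely generated modules of finite projective dimension. That Tor--Ext--Ass translation is the technical heart of \cite{APST}, and it cannot be replaced by a bare dimension shift plus a ``standard fact.''
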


If $C$ is a cotilting module, then $C^{\m}$ is a cotilting $R_{\m}$-module by Lemma \ref{coloc}. We will now consider relations between the corresponding characteristic sequences of subsets in $\Spec R$ and $\Spec {R_{\m}}$. First, we need more notation:

\begin{definition} \label{compat}
\begin{itemize}
\item[\rm{(i)}]  Let $\p \in \Spec {R_{\m}}$. Then $\widehat{\p}$ denotes the prime ideal of $R$ such that $\widehat{\p} \subseteq \m$ and $(\widehat{\p})_{\m} = \p$. Similarly, for a subset $P \subseteq \Spec {R_{\m}}$, $\widehat{P} = \{ \widehat{\p} \mid \p \in P \}$.
\item[\rm{(ii)}] Let $n < \omega$. Then $\mathfrak P$ is a \emph{compatible family of characteristic sequences} of length $n$ provided that $\mathfrak P = ( \mathcal P_{\m} \mid \m \in \mSpec R )$ where for each $\m \in \mSpec R$, $\mathcal P _{\m} = (P_{0,\m}, \dots ,P_{n-1,\m})$ is a characteristic sequence of length $n$ in $\Spec {R_{\m}}$, and $\widehat{P_{i,\m}}$ and $\widehat{P_{i,\m^\prime}}$ contain the same prime ideals from the set $\{ \p \in \Spec R \mid \p \subseteq \m \cap \m ^\prime \}$, for all $\m, \m^\prime \in \mSpec R$ and $i < n$.
\item[\rm{(iii)}]  Let $n < \omega$. Let $\mathfrak C = ( C(\m) \mid \m \in \mSpec R )$ be a family such that $C(\m)$ is an $n$-cotilting $R_{\m}$-module for each $\m \in \mSpec R$. Let $\mathcal P _{\m} = (P_{0,\m}, \dots ,P_{n-1,\m})$ be the characteristic sequence corresponding to the $n$-cotilting class $\mathcal C _{\m} = {}^{\perp} C(\m)$ in $\rmod R_{\m}$. Then $\mathfrak C$ is a \emph{compatible family of $n$-cotilting modules} provided that $( \mathcal P_{\m} \mid \m \in \mSpec R )$ is a compatible family of characteristic sequences of length $n$.

Two compatible families of $n$-cotilting modules, $\mathfrak C = ( C(\m) \mid \m \in \mSpec R )$ and $\mathfrak C ^\prime = ( C^\prime(\m) \mid \m \in \mSpec R )$, are called \emph{equivalent} provided that the $n$-cotilting $R_{\m}$-modules $C(\m)$ and $C^\prime(\m)$ are equivalent for each $\m \in \mSpec R$. 
\end{itemize}
\end{definition}

Notice that two compatible families $\mathfrak C$ and $\mathfrak C ^\prime$ of $n$-cotilting modules are equivalent, if and only if the corresponding compatible families of characteristice sequences $\mathfrak P$ and $\mathfrak P ^\prime$ are equal. 

\begin{lemma} \label{charloc} Let $n < \omega$, $\m \in \mSpec R$, and $C$ be an $n$-cotilting module. Let $\mathcal P = ( P_0, \dots, P_{n-1} )$ be the corresponding characteristic sequence in $\Spec R$. 
For each $i < n$, let $P_{i,\m} = \{ \p_{\m} \mid \p \in P_i {\,\&\,} \p \subseteq \m \}$. Then $\mathcal P _{\m} = ( P_{0,\m}, \dots , P_{n-1,\m})$ is a characteristic sequence in $\Spec {R_{\m}}$ which corresponds to the $n$-cotilting $R_{\m}$-module $C^{\m}$. Moreover, the families $\mathfrak P = ( \mathcal P_{\m} \mid \m \in \mSpec R )$ and $\mathfrak C = ( C^{\m} \mid \m \in \mSpec R )$ are compatible.
\end{lemma}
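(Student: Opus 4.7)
The plan is to verify the three assertions in turn: that $\mathcal{P}_\m$ satisfies the three conditions of Definition \ref{characts}, that it is the characteristic sequence attached to $C^\m$ via Lemma \ref{apst}, and finally that the family $\mathfrak{P}$ is compatible in the sense of Definition \ref{compat}(ii).

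For the first assertion, conditions (i) and (ii) follow at once from the classical bijection $\Spec R_\m \leftrightarrow \{\p \in \Spec R \mid \p \subseteq \m\}$ given by $\p \mapsto \p_\m$: lower subsets and inclusions are preserved. Condition (iii) requires $\Ass_{R_\m}\Omega^{-i}(R_\m) \subseteq P_{i,\m}$. Since $R$ is noetherian, localizations of injective $R$-modules at $\m$ are injective as $R_\m$-modules, so localizing a minimal injective resolution of $R$ produces an injective resolution of $R_\m$ over $R_\m$. Hence $\Omega^{-i}(R_\m)$, computed over $R_\m$, is the localization of the $R$-module $\Omega^{-i}(R)$; the standard description of associated primes under localization (valid since $R$ is noetherian) then gives $\Ass_{R_\m}\Omega^{-i}(R_\m) = \{\p_\m \mid \p \in \Ass_R \Omega^{-i}(R), \p \subseteq \m\} \subseteq P_{i,\m}$, as required.

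The main step is the second assertion. By the last clause of Lemma \ref{apst}, it suffices to show that for every $\p \in \Spec R_\m$ and every $i < n$, $\p \in \Ass(\,{}^\perp C^\m)_{(i)}$ if and only if $\p \in P_{i,\m}$. Write $\p = \widehat{\p}_\m$ with $\widehat{\p} \subseteq \m$. The key identification is $E_{R_\m}(R_\m/\p) = E_R(R/\widehat{\p})$: the right-hand side is already an $R_\m$-module (elements of $R \setminus \m$ act invertibly because $\widehat{\p} \subseteq \m$), and its socle is $R/\widehat{\p}$, which coincides with $R_\m/\p$ as an $R_\m$-module. Because $C$ is cotilting, it is pure-injective, so Lemma \ref{cartanei}(b) applied with $S = R \setminus \m$ and $A = E_{R_\m}(R_\m/\p)$ yields
\[
\Ext{j}{R_\m}{E_{R_\m}(R_\m/\p)}{C^\m} \cong \Ext{j}{R}{E_R(R/\widehat{\p})}{C}
\]
for all $j \geq 1$. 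Therefore $E_{R_\m}(R_\m/\p) \in ({}^\perp C^\m)_{(i)}$ iff $E_R(R/\widehat{\p}) \in (\,{}^\perp C)_{(i)}$, i.e.\ iff $\widehat{\p} \in P_i$, which is precisely the condition $\p \in P_{i,\m}$.

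Compatibility is then a direct unwinding. From the definition, $\widehat{P_{i,\m}} = \{\p \in P_i \mid \p \subseteq \m\}$, and similarly for $\m'$, so both
\[
\widehat{P_{i,\m}} \cap \{\p \in \Spec R \mid \p \subseteq \m \cap \m'\} \quad \text{and} \quad \widehat{P_{i,\m'}} \cap \{\p \in \Spec R \mid \p \subseteq \m \cap \m'\}
\]
coincide with $\{\p \in P_i \mid \p \subseteq \m \cap \m'\}$. I expect the main obstacle to lie in step two, specifically in justifying the identification $E_{R_\m}(R_\m/\p) = E_R(R/\widehat{\p})$ together with the careful application of Lemma \ref{cartanei}(b) via pure-injectivity of $C$; once these are in place, the reduction to the known characterization of $P_i$ in $\rmod R$ is immediate.
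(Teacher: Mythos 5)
Your proof is correct and follows essentially the same route as the paper: conditions (i)--(iii) of Definition \ref{characts} via localization of the minimal injective coresolution of $R$, then reduction of $\widehat{P_{i,\m}} = \Ass(\,{}^\perp C^\m)_{(i)}$ to the identity $\Ext{j}{R_\m}{E_{R_\m}(R_\m/\p_\m)}{C^\m} \cong \Ext{j}{R}{E_R(R/\widehat{\p})}{C}$ using the last clause of Lemma \ref{apst} and Lemma \ref{cartanei}(b), with compatibility falling out by unwinding. You spell out the identification $E_{R_\m}(R_\m/\p_\m) \cong E_R(R/\widehat{\p})$ that the paper leaves implicit (your remark that its ``socle is $R/\widehat{\p}$'' is slightly imprecise --- the socle over $R_{\widehat{\p}}$ is $\kappa(\widehat{\p})$ --- but the classical fact you invoke is correct), so this is a matter of exposition rather than a genuinely different argument.
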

\begin{proof} It is easy to see that $\mathcal P _{\m}$ satisfies conditions (i) and (ii) from Definition \ref{characts} for $R_{\m}$. Consider $i < n$ and let $\p \in \Ass \Omega ^{-i}(R_{\m})$. Since the minimal injective coresolution of $R$ localizes to the minimal injective coresolution of $R_{\m}$ (see e.g.\ \cite[\S 18]{M}), $\widehat{\p} \in \Ass \Omega ^{-i}(R) \subseteq P_i$ whence $\p = (\widehat{\p})_{\m} \in P_{i,\m}$. Thus condition (iii) holds for $\mathcal P _{\m}$.

Let $i < n$ and $\p \in \Spec R$ be such that $\p \subseteq \m$. Denote by $\mathcal C _{\m}$ the $n$-cotilting class in $\rmod R_{\m}$ induced by $C^{\m}$. By Lemma \ref{apst}, ${\p}_{\m} \in \Ass (\mathcal C _{\m})_{(i)}$ iff $E_{R_{\m}}(R_{\m}/\p_{\m}) \in (\mathcal C _{\m})_{(i)}$ iff  $\Omega^j(E_{R_{\m}}(R_{\m}/\p_{\m})) \in \mathcal C _{\m}$ for all $j > i$. The latter is equivalent to $\Ext j{R_{\m}}{E_{R_{\m}}(R_{\m}/\p_{\m})}{C^{\m}} = 0$ for all $j > i$, and to $\Ext jR{E(R/\p)}C= 0$ for all $j > i$ by Lemma \ref{cartanei}(b).

Similarly, $\p \in P_i = \Ass \mathcal C _{(i)}$ iff $\Ext jR{E(R/\p)}C= 0$, for all $\p \in \Spec R$ and $j > i$. Thus $\widehat{P_{i,\m}} = \Ass (\mathcal C _{\m})_{(i)}$ for each $i < n$. It follows that $\mathcal P _{\m}$ is a characteristic sequence corresponding to $\mathcal C _{\m}$, and that the family $\mathfrak P$ is compatible. Then $\mathfrak C$ is compatible by Definition \ref{compat}(iii).
\end{proof}

Now, we can extend Theorem \ref{yield} to a 1-1 correspondence between cotilting $R$-modules and compatible families of cotilting $R_{\m}$-modules ($\m \in \mSpec R$).

\begin{theorem} \label{fullch} Let $n < \omega$ and $C \in \rmod R$.
\begin{itemize}
\item[\rm{(i)}] If $C$ is an $n$-cotilting module, then $\mathfrak C _C = (C^{\m} \mid  \m \in \mSpec R )$ is a compatible family of $n$-cotilting $R_{\m}$-modules.
\item[\rm{(ii)}] Let $\mathfrak C = (C (\m) \mid  \m \in \mSpec R )$ be a compatible family of $n$-cotilting $R_{\m}$-modules. Then the module $D_{\mathfrak C} = \prod_{\m\in \mSpec R} C(\m)$ is $n$-cotilting. The characteristic sequence corresponding to $D_{\mathfrak C}$ is $\mathcal P = (P_0, \dots , P_{n-1})$ where $P_i = \bigcup_{\m \in mSpec R} \widehat{P_{i,\m}}$ for each $i < n$, and $(P_{0,\m}, \dots ,P_{n-1, \m})$ is the characteristic sequence corresponding to $C(\m)$.
\item[\rm{(iii)}] If $C$ is $n$-cotilting, then $D_{\mathfrak C _C}$ is an $n$-cotilting module equivalent to $C$.
\item[\rm{(iv)}] If  $\mathfrak C = (C(\m) \mid  \m \in \mSpec R )$ is a compatible family of $n$-cotilting $R_{\m}$-modules and $\mathfrak C _{D _{\mathfrak C}} = ( (D_{\mathfrak C})^{\m} \mid  \m \in \mSpec R )$, then the families $\mathfrak C$ and $\mathfrak C _{D _{\mathfrak C}}$ are \emph{equivalent} (that is, $(D_{\mathfrak C})^{\m}$ is an $n$-cotilting $R_{\m}$-module equivalent to $C(\m)$ for each $\m \in \mSpec R$).
\end{itemize}
\end{theorem}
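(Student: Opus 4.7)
Parts (i) and (iii) are essentially direct consequences of the earlier lemmas. For (i), in the noetherian setting every cotilting module is of cofinite type (Lemma \ref{apst}), so Lemma \ref{dual} yields that each $C^{\m}$ is an $n$-cotilting $R_{\m}$-module, while the compatibility of $\mathfrak C_C$ is exactly the content of Lemma \ref{charloc}. Part (iii) is Theorem \ref{yield} applied in the noetherian setting. My plan for (ii) is to use the classification in Lemma \ref{apst} to produce a single cotilting $R$-module whose colocalizations realize the prescribed family up to equivalence, and then to lift the $\m$-by-$\m$ equivalence to a global $\Prod$-equivalence of products; part (iv) will then fall out of (ii) combined with Lemma \ref{charloc}.

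For (ii), I would first verify that $\mathcal P = (P_0,\dots,P_{n-1})$ is a characteristic sequence in $\Spec R$: each axiom of Definition \ref{characts} transfers from the $\mathcal P_{\m}$, the nontrivial condition (iii) using that the minimal injective coresolution of $R$ localizes to that of $R_{\m}$ — given $\p \in \Ass\Omega^{-i}(R)$, pick any $\m \supseteq \p$; then $\p_{\m} \in \Ass\Omega^{-i}(R_{\m}) \subseteq P_{i,\m}$, so $\p \in \widehat{P_{i,\m}} \subseteq P_i$. Lemma \ref{apst} then produces an $n$-cotilting $R$-module $C$ with characteristic sequence $\mathcal P$, and by (iii) the module $\prod_{\m} C^{\m}$ is $n$-cotilting and equivalent to $C$. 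By Lemma \ref{charloc}, the characteristic sequence of $C^{\m}$ as $R_{\m}$-cotilting module is $\{\p_{\m} \mid \p \in P_i,\ \p \subseteq \m\}$, and the compatibility of $\mathfrak P$ collapses $\{\p \in P_i \mid \p \subseteq \m\}$ to $\widehat{P_{i,\m}}$ (any such $\p$ lies in some $\widehat{P_{i,\m'}}$ with $\p \subseteq \m \cap \m'$, forcing $\p \in \widehat{P_{i,\m}}$), so this characteristic sequence equals $P_{i,\m}$, the same as that of $C(\m)$. By Lemma \ref{apst} applied over $R_{\m}$, $C^{\m}$ and $C(\m)$ are equivalent $R_{\m}$-cotilting modules, whence $\Prod_{R_{\m}} C^{\m} = \Prod_{R_{\m}} C(\m)$; since $\Prod$ of $R_{\m}$-modules coincides over $R$ and over $R_{\m}$, both $C(\m) \in \Prod_R C^{\m}$ and $C^{\m} \in \Prod_R C(\m)$ hold.

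The main obstacle is to lift these $\m$-by-$\m$ $\Prod$-equivalences to a single $\Prod_R$-equivalence between $\prod_{\m} C^{\m}$ and $\prod_{\m} C(\m)$. For each $\m$ I would choose a set $I_{\m}$ and an $R$-module $X(\m)$ with $C(\m) \oplus X(\m) \cong (C^{\m})^{I_{\m}}$, and fix a cardinal $\kappa \geq \sup_{\m \in \mSpec R} |I_{\m}|$ (legitimate since $\mSpec R$ is a set). Then $\prod_{\m} C(\m)$ is a direct summand of $\prod_{\m}(C^{\m})^{\kappa} \cong (\prod_{\m} C^{\m})^{\kappa}$, giving $\prod_{\m} C(\m) \in \Prod_R \prod_{\m} C^{\m}$; the reverse inclusion is symmetric. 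Being $n$-cotilting is invariant under $\Prod$-equivalence: (C1) since injective dimension is preserved under $\Prod$-containment, (C3) since $\Prod$-equivalent modules share the same $\Prod$-class so the (C3)-coresolution of $\prod_{\m} C^{\m}$ also witnesses (C3) for $\prod_{\m} C(\m)$, and (C2) since ${}^{\perp}\prod_{\m} C(\m) = {}^{\perp}\prod_{\m} C^{\m}$ contains all of $\Prod_R \prod_{\m} C(\m)$, in particular all powers of $\prod_{\m} C(\m)$. Thus $D_{\mathfrak C}$ is $n$-cotilting and equivalent to $C$, giving it the characteristic sequence $\mathcal P$.

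Finally, (iv) is immediate: applying Lemma \ref{charloc} to the $n$-cotilting module $D_{\mathfrak C}$ from (ii) yields that the characteristic sequence of $(D_{\mathfrak C})^{\m}$ is $\{\p_{\m} \mid \p \in P_i,\ \p \subseteq \m\}$, which by the compatibility argument above equals $P_{i,\m}$, the characteristic sequence of $C(\m)$. By Lemma \ref{apst}, $(D_{\mathfrak C})^{\m}$ and $C(\m)$ are therefore equivalent as $R_{\m}$-cotilting modules.
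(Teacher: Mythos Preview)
Your proof is correct; parts (i), (iii), and (iv) match the paper's argument essentially verbatim. For part (ii), however, you take a genuinely different route.

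The paper verifies (C1)--(C3) for $D_{\mathfrak C}$ directly: (C1) is immediate, (C3) is obtained by taking the product over $\m$ of the (C3)-sequences for the $C(\m)$, and (C2) together with the identification ${}^{\perp}D_{\mathfrak C} = \mathcal C_{\mathcal P}$ is established by an associated-prime computation, using that minimal injective coresolutions localize. Your approach is more indirect but more structural: you invoke Lemma~\ref{apst} to manufacture an $n$-cotilting $R$-module $C$ with characteristic sequence $\mathcal P$, use part (iii) to know that $\prod_{\m} C^{\m}$ is cotilting, show via Lemma~\ref{charloc} and compatibility that $C^{\m}$ and $C(\m)$ are equivalent (hence $\Prod$-equivalent) over $R_{\m}$ for each $\m$, and then lift these local $\Prod$-equivalences to a global one between $\prod_{\m} C^{\m}$ and $\prod_{\m} C(\m)$ by bounding the exponents uniformly. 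The conclusion then rests on the (standard, in \cite{GT}) facts that equivalent cotilting modules share the same $\Prod$-class and that the cotilting property is preserved under $\Prod$-equivalence. The paper's argument is more self-contained and extracts the cotilting class ${}^{\perp}D_{\mathfrak C}$ explicitly from the associated primes; your argument avoids repeating the (C1)--(C3) verification by recycling part (iii), at the cost of importing the $\Prod$-class characterization of cotilting equivalence from the literature.
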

\begin{proof} (i) This follows by Lemma \ref{charloc}.

(ii) The compatibility of $\mathfrak C$ implies that $\mathcal P$ is a characteristic sequence in $\Spec R$: conditions (i) and (ii) of Definition \ref{characts} are clearly satisfied, and condition (iii) follows from the fact that the minimal injective coresolution of $R$ localizes to the minimal injective coresolution of $R_{\m}$ for each $\m \in \mSpec R$.

We will prove that $D_{\mathfrak C}$ is an $n$-cotilting module corresponding to $\mathcal P$.

Since each injective $R_{\m}$-module is injective also as $R$-module, $D_{\mathfrak C}$ is a product of modules of injective dimension $\leq n$, whence condition (C1) holds for $D_{\mathfrak C}$.

Further, for each $\m \in \mSpec R$, there is a long exact sequence
$$\mathcal E _{\m} : 0 \to C_{n,\m} \to \dots \to C_{0,\m} \to E_{R_{\m}}(R_{\m}/\m_{\m}) \to 0$$
where $C_{i,\m} \in \Prod C(\m)$ as an $R_{\m}$-module. Since $C(\m) \in \Prod D_{\mathfrak C}$ as an $R$-module, and $W = \prod_{\m \in \mSpec R} E(R/\m)$ is an injective cogenerator for $\rmod R$, the sequence $\mathcal E = \prod_{\m \in \mSpec R} \mathcal E _{\m}$ witnesses condition (C3) for $D_{\mathfrak C}$.

Let $\mathcal C _{\mathcal P}$ denote the $n$-cotilting class corresponding to $\mathcal P$ by Lemma \ref{apst}. Assume that $M \in \mathcal C _{\mathcal P}$. We claim that $\Ext iR{M}{D_{\mathfrak C}} = 0$ for each $i \geq 1$, that is, $\Ext iRM{C(\m)}= 0$ for all $i \geq 1$ and $\m \in \mSpec R$.

By assumption $\Ass \Omega ^{-i} M \subseteq P_i$, hence $\Ass \Omega ^{-i} M_{\m} \subseteq P_{i,\m}$ for each $\m \in \mSpec R$, by the compatibility of $\mathfrak C$. So $M_{\m}$ belongs to the $n$-cotilting class induced by $C(\m)$. By Lemma \ref{cartanei}(a), we infer that $\Ext iRM{C(\m)} \cong \Ext i{R_{\m}}{M_{\m}}{C(\m)}=0$ for all $i \geq 1$ and $\m \in \mSpec R$, and the claim is proved. In other words, $\mathcal C _{\mathcal P} \subseteq {}^{\perp} D_{\mathfrak C}$.

In order to prove condition (C2) for $D_{\mathfrak C}$, it only suffices to show that $(D_{\mathfrak C})^\kappa \in \mathcal C _{\mathcal P}$ for all cardinals $\kappa$. Since all the sets $P_i$ are lower subsets of $\Spec R$, it suffices to prove that $\Ass \Omega ^{-i} C(\m) \subseteq P_i$ for all  $i < n$ and all $\m \in \mSpec R$. Consider a minimal injective coresolution $\mathcal I$ of $C(\m)$ in $\rmod R _{\m}$. By assumption, $\Ass{_{R_{\m}}\Omega ^{-i} C(\m)} \subseteq P_{i,\m}$. Since $\mathcal I$ is also an injective coresolution in $\rmod R$, we have $\Ass \Omega ^{-i} C(\m) \subseteq \widehat{P_{i,\m}} \subseteq P_i$. This proves condition (C2).

Finally, assume that $M$ is a module such that $\Ext iRM{D_{\mathfrak C}} = 0$ for all $i \geq 1$. Let $\m \in \mSpec R$. Then $\Ext iRM{C(\m)}= 0$ for all $i \geq 1$. Lemma \ref{cartanei}(a) gives $\Ext i{R_{\m}}{M_{\m}}{C(\m)}= 0$, and hence $\Ass{_{R_{\m}}\Omega ^{-i} M_{\m}} \subseteq P_{i,\m}$, for all $i \geq 1$.

Consider a minimal injective coresolution $\mathcal I$ of $M$ in $\rmod R$. Localizing at $\m$, we obtain a minimal injective coresolution of $M_{\m}$ in $\rmod R_{\m}$.
Since $\Ass{_{R_{\m}}\Omega ^{-i} M_{\m}} \subseteq P_{i,\m}$, we infer that $\Ass \Omega ^{-i} M \subseteq \bigcup_{\m \in mSpec R} \widehat{P _{i,\m}} = P_i$.
Thus ${}^{\perp} D_{\mathfrak C} \subseteq \mathcal C _{\mathcal P}$, and $\mathcal P$ is the characteristic sequence of $D_{\mathfrak C}$.

(iii) This follows by Lemma \ref{main}.

(iv) By part (ii), the characteristic sequence of $D_{\mathfrak C}$ is $\mathcal P = (P_0, \dots ,P_{n-1})$, where  $P_i = \bigcup_{\m \in mSpec R} 
\widehat{P_{i,\m}}$ for each $i < n$, and $(P_{0,\m}, \dots ,P_{n-1, \m})$ denotes the characteristic sequence corresponding to $C(\m)$. By Lemma \ref{charloc}, $(D_{\mathfrak C})^{\m}$ is an $n$-cotilting module with the characteristic sequence $Q_{i,\m} = \{ \q_{\m} \mid \q \in P_i {\,\&\,} \q \subseteq \m \}$. By compatibility, $Q_{i,\m} = P_{i,\m}$ for all $i < n$, so $C(\m)$ is equivalent to $(D_{\mathfrak C})^{\m}$ for each $\m \in \mSpec R$.
\end{proof}

\begin{corollary} \label{coloccor} Let $n < \omega$. There is a 1-1 correspondence between equivalence classes of $n$-cotilting $R$-modules and equivalence classes of compatible families of $n$-cotilting $R_{\m}$-modules.

The equivalence class of an $n$-cotilting module $C$ corresponds to the equivalence class of the family of its colocalizations $( C^{\m} \mid \m \in \mSpec R )$. Conversely, the equivalence class of a compatible family $( C(\m) \mid \m \in \mSpec R )$ corresponds to the equivalence class of the $n$-cotilting module $\prod_{\m \in \mSpec R} C(\m)$.
\end{corollary}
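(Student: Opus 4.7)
The plan is to deduce the corollary almost immediately by packaging the four parts of Theorem \ref{fullch}; the substantive content of the bijection is already in that theorem, so the only real work is checking that the two candidate maps descend to equivalence classes and that they invert each other at the level of classes.

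First I would set up the maps. Define $\Phi \dd [C] \mapsto [\mathfrak C_C] = [(C^{\m} \mid \m \in \mSpec R)]$ on equivalence classes of $n$-cotilting $R$-modules, and $\Psi \dd [\mathfrak C] \mapsto [D_{\mathfrak C}] = [\prod_{\m \in \mSpec R} C(\m)]$ on equivalence classes of compatible families of $n$-cotilting $R_{\m}$-modules. By Theorem \ref{fullch}(i), $\mathfrak C_C$ is a compatible family of $n$-cotilting $R_{\m}$-modules, and by Theorem \ref{fullch}(ii), $D_{\mathfrak C}$ is an $n$-cotilting $R$-module, so the maps take values in the correct classes of objects.

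Next I would check that $\Phi$ and $\Psi$ are well defined on equivalence classes. For $\Phi$, if $C$ and $C'$ are equivalent $n$-cotilting modules, they induce the same characteristic sequence $\mathcal P = (P_0,\dots,P_{n-1})$ in $\Spec R$; by Lemma \ref{charloc}, the characteristic sequence of $C^{\m}$ is $\mathcal P_{\m} = (P_{0,\m},\dots,P_{n-1,\m})$ with $P_{i,\m} = \{\p_{\m} \mid \p \in P_i, \p \subseteq \m\}$, and this depends only on $\mathcal P$ and $\m$. By Lemma \ref{apst} applied over $R_{\m}$, this determines the cotilting class ${}^{\perp}C^{\m}$, so $C^{\m}$ and $(C')^{\m}$ are equivalent for each $\m$, and thus $\mathfrak C_C$ and $\mathfrak C_{C'}$ are equivalent. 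For $\Psi$, equivalent compatible families have, by definition, the same associated family of characteristic sequences; by Theorem \ref{fullch}(ii), the characteristic sequence $\mathcal P$ of $D_{\mathfrak C}$ is determined from this family by $P_i = \bigcup_{\m \in \mSpec R} \widehat{P_{i,\m}}$, and therefore equivalent families produce equivalent $n$-cotilting modules $D_{\mathfrak C}$, again via Lemma \ref{apst}.

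Finally I would verify that $\Psi \circ \Phi = \mathrm{id}$ and $\Phi \circ \Psi = \mathrm{id}$. The composition $\Psi \Phi [C] = [\prod_{\m} C^{\m}]$ equals $[C]$ by Theorem \ref{fullch}(iii), since $D_{\mathfrak C_C} = \prod_{\m} C^{\m}$ is $n$-cotilting and equivalent to $C$. Conversely, $\Phi \Psi [\mathfrak C] = [\mathfrak C_{D_{\mathfrak C}}] = [((D_{\mathfrak C})^{\m} \mid \m \in \mSpec R)]$, and by Theorem \ref{fullch}(iv) this family is equivalent to $\mathfrak C$. This establishes the 1-1 correspondence and the explicit description of the two assignments claimed in the statement. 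The only step that requires any vigilance is the well-definedness argument in the previous paragraph, where one must invoke the bijection of Lemma \ref{apst} in order to pass from equality of characteristic sequences back to equivalence of cotilting modules; there is no further obstacle.
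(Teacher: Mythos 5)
Your proposal is correct and follows the same route as the paper: the corollary is essentially a restatement of Theorem \ref{fullch}, with parts (i)--(ii) giving the two assignments and parts (iii)--(iv) showing they are mutually inverse up to equivalence. The paper leaves the well-definedness check implicit, and your explicit verification via Lemma \ref{apst} and Lemma \ref{charloc} is exactly the intended reasoning.
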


We finish by translating Corollary \ref{coloccor} to the tilting setting. First, we have to introduce the corresponding notion of a compatible family:

\begin{definition} \label{compatilt}
Let $n < \omega$. Let $\mathfrak T = ( T(\m) \mid \m \in \mSpec R )$ be a family such that $T(\m)$ is an $n$-cotilting $R_{\m}$-module for each $\m \in \mSpec R$. Let $\mathcal P _{\m} = (P_{0,\m}, \dots ,P_{n-1,\m})$ be the characteristic sequence corresponding to the $n$-cotilting class $\mathcal T _{\m} = T(\m)^{\perp}$ in $\rmod R_{\m}$ by Lemma \ref{apst}. Then $\mathfrak T$ is a \emph{compatible family of $n$-tilting modules} provided that 
$\mathfrak P = ( \mathcal P_{\m} \mid \m \in \mSpec R )$ is a compatible family of characteristic sequences of length $n$.

Two compatible families of $n$-tilting modules, $\mathfrak T = ( T(\m) \mid \m \in \mSpec R )$ and $\mathfrak T ^\prime = ( T^\prime(\m) \mid \m \in \mSpec R )$, are called \emph{equivalent} provided that the $n$-tilting $R_{\m}$-modules $T(\m)$ and $T^\prime(\m)$ are equivalent for each $\m \in \mSpec R$ (that is, the corresponding compatible families of characteristic sequences $\mathfrak P$ and $\mathfrak P ^\prime$ coincide).  
\end{definition}   

\begin{corollary} \label{loccor} Let $n < \omega$. There is a 1-1 correspondence between equivalence classes of $n$-tilting $R$-modules and equivalence classes of compatible families of $n$-tilting $R_{\m}$-modules.
The equivalence class of an $n$-tilting module $T$ corresponds to the equivalence class of the family of its localizations $( T_{\m} \mid \m \in \mSpec R )$. 
\end{corollary}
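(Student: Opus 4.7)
The plan is to obtain the corollary as a formal consequence of Corollary \ref{coloccor} by passing through the classifying invariant shared by tilting and cotilting classes over commutative noetherian rings, namely characteristic sequences (Lemma \ref{apst}). Concretely, I compose three bijections: (a) equivalence classes of $n$-tilting $R$-modules $\leftrightarrow$ characteristic sequences $\mathcal P = (P_0,\dots,P_{n-1})$ of length $n$ in $\Spec R$, which is the tilting half of Lemma \ref{apst}; (b) characteristic sequences in $\Spec R$ $\leftrightarrow$ compatible families $\mathfrak P = (\mathcal P_{\m} \mid \m \in \mSpec R)$ of characteristic sequences of length $n$; (c) compatible families of characteristic sequences $\leftrightarrow$ equivalence classes of compatible families of $n$-tilting $R_{\m}$-modules, which follows by applying the tilting half of Lemma \ref{apst} at each $R_{\m}$ together with Definition \ref{compatilt}.

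For (b), the forward map is $\mathcal P \mapsto (\mathcal P _{\m})_{\m}$ with $P_{i,\m} = \{ \p_{\m} \mid \p \in P_i \text{ and } \p \subseteq \m \}$, and the inverse is $\mathfrak P \mapsto (\bigcup_{\m \in \mSpec R} \widehat{P_{i,\m}} \mid i < n)$. That these assignments are mutually inverse, well-defined on characteristic sequences, and preserve the axioms of Definition \ref{characts} is essentially the bookkeeping already carried out in Lemma \ref{charloc} and Theorem \ref{fullch}(ii): the lower-subset property of each $P_i$ makes $\widehat{P_{i,\m}} = \{ \p \in P_i \mid \p \subseteq \m \}$, so the union over $\m$ recovers $P_i$, while compatibility of $\mathfrak P$ ensures that the union is unambiguous when a prime is contained in several maximal ideals.

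It remains to identify what the composed forward map does on a given $n$-tilting module $T$. By \cite[13.50]{GT} each localization $T_{\m}$ is an $n$-tilting $R_{\m}$-module. Let $\mathcal P$ denote the characteristic sequence attached to $T$ under (a); by Lemma \ref{apst}, the dual cotilting module $C = T^*$ has the same characteristic sequence $\mathcal P$. By Lemma \ref{dual} we have $(T_{\m})^* \cong C^{\m}$, and Lemma \ref{apst} applied over $R_{\m}$ shows that the $R_{\m}$-tilting module $T_{\m}$ and its dual $R_{\m}$-cotilting module $C^{\m}$ share the same characteristic sequence in $\Spec R_{\m}$; by Lemma \ref{charloc} this sequence is $\mathcal P _{\m}$. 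Consequently the family $(T_{\m})_{\m}$ is compatible, and it corresponds to $T$ under the composed bijection. Conversely, reversing these steps and invoking Corollary \ref{coloccor} shows that every equivalence class of compatible families $(T(\m))_{\m}$ arises in this way from a unique equivalence class of $n$-tilting $R$-modules.

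The main obstacle is not a calculation but the phenomenon already flagged in the abstract: the inverse bijection admits no direct module-theoretic description paralleling the product $\prod_{\m} C(\m)$ of the cotilting case. Given a compatible family $(T(\m))_{\m}$ one extracts its compatible family of characteristic sequences, amalgamates them into a single $\mathcal P$ in $\Spec R$, and invokes Lemma \ref{apst} to recover an $n$-tilting class $\mathcal T _{\mathcal P}$; but this recovery is indirect, with no functorial formula in the $T(\m)$'s, reflecting the fundamental asymmetry between localization (which is exact but does not commute with inverse limits) and colocalization.
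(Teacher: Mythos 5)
Your proof is correct and takes essentially the same route as the paper's: both pass from tilting $R$-modules to their dual cotilting modules, transport the classification through colocalization (Corollary \ref{coloccor}, or equivalently the characteristic-sequence bookkeeping underlying it), and use Lemma \ref{dual} to identify $C^{\m} \cong (T_{\m})^*$ so that the composed bijection is $T \mapsto (T_{\m})_{\m}$. The only stylistic difference is that you make the middle bijection explicit at the level of characteristic sequences (Lemma \ref{apst}, Lemma \ref{charloc}, Theorem \ref{fullch}(ii)) rather than invoking Corollary \ref{coloccor} directly, which is a harmless and slightly more transparent rearrangement of the same ingredients.
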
  
\begin{proof} By Lemma \ref{apst}, (equivalence classes of) $n$-tilting modules correspond 1-1, via the duality $(-)^*$, to (equivalence classes of) cotilting modules. Moreover, by \cite[15.18]{GT}, two $n$-tilting modules $T$ and $T^\prime$ are equivalent, iff the dual modules $C = T^*$ and $C^\prime = (T^\prime)^*$ are equivalent as $n$-cotilting modules. By Corollary \ref{coloccor}, there is a further 1-1 correspondence to (equivalence classes of) compatible families of colocalizations $( C^{\m} \mid \m \in \mSpec R )$. Since $C^{\m}$ is isomorphic to $(T_{\m})^*$ by Lemma \ref{dual} for each $\m \in \mSpec R$, applying Lemma \ref{apst} and \cite[15.18]{GT} again, we finally proceed to (equivalence classes of) compatible families of localizations of $T$. 
\end{proof}

\begin{remark}\label{remfin} \rm
The 1-1 correspondence in Corollary \ref{coloccor} is easy to compute in both directions. In particular, the cotilting module $C$ is recovered up to equivalence as the product of its colocalizations: $\prod_{\m \in \mSpec R} C^{\m}$. There does not appear to be any simple way of computing the tilting module $T$ from the family of its localizations $( T_{\m} \mid \m \in \mSpec R )$: the naive idea of taking $T^\prime = \bigoplus_{\m \in \mSpec R} T_{\m}$ fails already for $n = 0$ (where $T$ is a projective generator, but $T^\prime$ is flat and non-projective, hence not tilting; in fact, though $(T^\prime)^*$ is a cotilting module equivalent to $T^*$, $T^\prime \notin \Add T$).  
\end{remark}

\medskip

\end{document}